\newcommand{\reals}{\mbox{$\mathbb R$}}
\newcommand{\nats}{\mbox{$\mathbb N$}}
\newcommand{\ints}{\mbox{$\mathbb Z$}}
\newcommand{\comment}[1]{}
\def\squarebox#1{\hbox to #1{\hfill\vbox to #1{\vfill}}}
\def\qed{\hspace*{\fill}
        \vbox{\hrule\hbox{\vrule\squarebox{.667em}\vrule}\hrule}\smallskip}
\newenvironment{proof}{\begin{trivlist}
  \item[\hspace{\labelsep}{\em\noindent Proof.~}]
  }{\qed\end{trivlist}}
\newtheorem{lemma}{Lemma}[section]
\newtheorem{theorem}[lemma]{Theorem}
\newtheorem{proposition}[lemma]{Proposition}
\newtheorem{claim}[lemma]{Claim}
\newtheorem{observation}[lemma]{Observation}
\def\squareforqed{\hbox{\rlap{$\sqcap$}$\sqcup$}}
\def\qed{\ifmmode\squareforqed\else{\unskip\nobreak\hfil
\penalty50\hskip1em\null\nobreak\hfil\squareforqed
\parfillskip=0pt\finalhyphendemerits=0\endgraf}\fi}
\newlength{\tablength}
\newlength{\spacelength}
\newcommand{\tabstar}{\hspace*{\tablength}}
\newcommand{\spacestar}{\hspace*{\spacelength}}
\def\obeytabs{\catcode`\^^I=\active}
{\obeytabs\global\let^^I=\tabstar}
{\obeyspaces\global\let =\spacestar}
\newenvironment{display}{\begingroup\obeylines\obeyspaces\obeytabs}{\endgroup}
\newenvironment{prog}{\begin{display}\parskip0pt\sf}{\end{display}}
\title{Induced subgraphs of hypercubes}
\author{
{\sl Geir Agnarsson}
\thanks{Department of Mathematical Sciences,
George Mason University, 
MS 3F2, 
4400 University Drive, 
Fairfax, VA 22030, 
{\tt geir@math.gmu.edu}}} 
\date{}
\begin{document}

\maketitle

\begin{abstract}
Let $Q_k$ denote the $k$-dimensional hypercube 
on $2^k$ vertices. A vertex in a subgraph of $Q_k$ 
is {\em full} if its degree is $k$. We apply the Kruskal-Katona
Theorem to compute the maximum number of full vertices 
an induced subgraph on $n\leq 2^k$ vertices of $Q_k$ 
can have, as a function of $k$ and $n$. This is then used
to determine $\min(\max( |V(H_1)|, |V(H_2)|))$
where (i) $H_1$ and $H_2$ are induced subgraphs of $Q_k$, and
(ii) together they cover all the edges of $Q_k$, that is
$E(H_1)\cup E(H_2) = E(Q_k)$.

\vspace{3 mm}

\noindent {\bf 2000 MSC:} 
05A15, 
05C35. 

\vspace{2 mm}

\noindent {\bf Keywords:}
hypercube,
induced subgraphs,
maximin recurrence,
Kruskal-Katona Theorem.
\end{abstract}

\section{Introduction}
\label{sec:intro}

The maximum number $f(n)$ of edges of an induced subgraph on $n$ vertices
of the hypercube $Q_k$, where $k \geq \lceil \lg n \rceil$, 
has been studied extensively in~\cite{S.Hart-DM-note}, 
\cite{Mcllroy-SIAM-binary}, ~\cite{Greene-Knuth}, ~\cite{Delange-sur-la},
and~\cite{HyperCube} to name a few articles. The function $f(n)$
satisfies, and is determined by, the well-known divide-and-conquer 
maximin recurrence
\begin{equation}
\label{eqn:max-f}
f(n) = \max_{\substack{n_1 + n_2 = n \\ 
                          n_1, n_2 \geq 1}} 
               \left( \min(n_1,n_2) + f(n_1) + f(n_2)\right),
\end{equation}
and can be expressed compactly by the formula $f(n) = \sum_{i=0}^{n-1}s(i)$,
where $s(i)$ is the sum of the digits of $i$ when expressed
as a binary number. The function $f$ and its number sequence 
$(f(n))_{n=0}^{\infty} = (0,1,2,4,5,7,9,12,13,15,17,20,\ldots)$
is given in~\cite[A000788]{sum-of-1s}, where it is presented by a 
different recursion. The divide-and-conquer maximin recurrence 
(\ref{eqn:max-f}) 
is one of the most studied recurrences, especially since it
occurs naturally when analysing wort-case scenarios in sorting 
algorithms~\cite{Li-Reingold-maximin}. The maximin recurrence 
(\ref{eqn:max-f}) is also one of the few such maximin recurrences
that have a solution $f(n)$ that can be expressed explicitly by a
formula.

Clearly the hypercube $Q_k$ is a subgraph of the $k$-dimensional
rectangular grid graph ${\ints}^k$. It is interesting to note that 
for $k \geq \lceil \lg n \rceil$ the maximum number of edges of an 
induced subgraph on $n$ vertices of ${\ints}^k$ is the same
if we restrict to $Q_k$, namely $f(n)$. However, if we consider
$k$ fixed and consider the maximum number $g_k(n)$ of edges of an induced
subgraph on $n$ vertices of the grid graph ${\ints}^k$, then
the only cases where a formula for $g_k(n)$ is known is for $k\in\{1,2\}$:
trivially $g_1(n) = n-1$, and $g_2(n) = \lceil 2n - 2\sqrt{n}\rceil$
as proved in~\cite{EX-animals}. For $k\geq 3$ no formula for $g_k(n)$ is
known, but the first few terms of 
$(g_3(n))_{n=1}^{\infty} = (0,1,2,4,5,7,9,12,13,15,17,20,\ldots)$
is given heuristically in~\cite[A007818]{wrong}. -- In short, 
considering $k$ fixed (and hence not allowing conveniently 
large dimensions) makes it harder to solve such maximin problems.

The purpose of this article is to consider a related problem of induced
subgraphs on $n$ vertices of the hypercube $Q_k$ where we
consider $k$ fixed. A vertex of a subgraph of $Q_k$ is called {\em full}
in the subgraph if its degree is $k$. If we let $\phi_k(n)$ be the maximum
number of full vertices an induced subgraph on $n$ vertices
of $Q_k$ can have, then (i) we show that $\phi_k(n)$ satisfies 
a divide-and-conquer maximin recurrence (\ref{eqn:feqn}), and (ii) 
we derive its solution, namely the formula for $\phi_k(n)$ given 
in Theorem~\ref{thm:main-full}.
We then apply the formula for $\phi_k(n)$ to (iii) determine 
the min-max function $\min(\max( |V(H_1)|, |V(H_2)|))$
where both $H_1$ and $H_2$ are induced subgraphs of $Q_k$, and
together they cover all the edges of $Q_k$. We show that 
this min-max function is given by the formula in 
Theorem~\ref{thm:min-max-cover}.

The remainder of the paper is organized as follows. 

In Section~\ref{sec:UBF} we recall the celebrated Katona-Kruskal
Theorem that describes when exactly an integral vector of ${\ints}^{d+1}$
is an $f$-vector of a $(d-1)$-dimensional simplicial complex.
We then derive some helpful tools: Claim~\ref{clm:sum-less} and
Lemmas~\ref{lmm:1},~\ref{lmm:2}, and~\ref{lmm:3}, that we will use
in the following section.

In Section~\ref{sec:main} we use what we have derived in Section~\ref{sec:UBF}
to derive our main Theorem~\ref{thm:main-full} that determines the
exact maximum number of full vertices an induced subgraph on $n$ vertices
of $Q_k$ can have.

In the final Section~\ref{sec:appl} we apply Theorem~\ref{thm:main-full}
from the previous section prove Theorem~\ref{thm:min-max-cover}, that
determines $\min(\max( |V(H_1)|, |V(H_2)|))$, the function of $k\in\nats$  
where $H_1$ and $H_2$ are induced subgraphs of $Q_k$, and
together $H_1$ and $H_2$ cover all the edges of $Q_k$. 

\paragraph{Notation and terminology}
The set of integers will be denoted by $\ints$ and
the set of natural numbers $\{1,2,3,\ldots\}$ by $\nats$.
For $n\in\nats$ let $[n] = \{1,\ldots,n\}$.
For a set $X$ denote the set of all subsets of $X$ by
$2^X$. Denote the subsets of $X$ of 
cardinality $i$ by $\binom{X}{i}$, so for $X$ finite
we have $\left|\binom{X}{i}\right| = \binom{|X|}{i}$.
For ${\cal{S}}\subseteq 2^X$ and $y\not\in X$,
let ${\cal{S}}\uplus \{y\} = \{S \cup \{y\} : S\in {\cal{S}}\}$.

Unless otherwise stated, all graphs in this article
will be finite, simple and undirected. For a graph $G$, its
set of vertices will be denoted by $V(G)$ and its set
of edges by $E(G)$. Clearly $E(G)\subseteq \binom{V(G)}{2}$
the set of all 2-element subsets of $V(G)$. We will denote
an edge with endvertices $u$ and $v$ by $uv$ instead of the
actual 2-set $\{u,v\}$. 
By an {\em induced subgraph} $H$ of $G$ we mean a subgraph
$H$ such that $V(H)\subseteq V(G)$ in the usual set theoretic
sense, and such that if $u,v\in V(H)$ and $uv\in E(G)$, then
$uv\in E(H)$. If $U\subseteq V(G)$ then the subgraph of $G$ 
induced by $V$ will be denoted by $G[U]$. 

\comment{ 
For $k\in\nats$ a {\em rectangular grid}
${\ints}^k$ in our context is a infinite graph with the point set
${\ints}^k$ as its vertices and where two points
$\tilde{x} = (x_1,\ldots,x_k)$ and $\tilde{y} = (y_1,\ldots,y_k)$
are connected by an edge iff the {\em Manhattan distance}
$d(\tilde{x},\tilde{y}) = \sum_{i=1}^k |x_i - y_i| = 1$.
So, two points are connected iff they only differ in one
coordinate, in which they differ by $\pm 1$. This Manhattan
distance measure is the metric corresponding to the
{\em 1-norm}  $\|\tilde{x}\|_1 = \sum_{i=1}^k|x_i|$ in
the $k$-dimensional Euclidean space ${\reals}^k$. The {\em hypercube} $Q_k$
is then the subgraph of the grid ${\ints}^k$ induced by
the $2^k$ points $\{0,1\}^k$, so in graph theoretic terms here above
we have $Q_k = {\ints}^k[\{0,1\}^k]$. 
}
 
For $k\in\nats$ the {\em hypercube}
$Q_k$ in our context is a simple graph with the $2^k$
vertices $\{0,1\}^k$, and where two vertices 
$\tilde{x},\tilde{y}\in \{0,1\}^k$ are adjacent
iff the {\em Manhattan distance}
$d(\tilde{x},\tilde{y}) = \sum_{i=1}^k |x_i - y_i| = 1$.
So, two vertices are connected iff they only differ in one
coordinate, in which they differ by $\pm 1$. The vertices of the 
hypercube $Q_k$ are more commonly viewed as binary strings of length $k$ 
instead of actually points in the $k$-dimensional Euclidean space. 
In that case the Manhattan distance is called the called the 
{\em Hamming distance}. 
We will not make a specific distinction between these two slightly 
different presentations  of the hypercube $Q_k$. In many situations
it will be convenient to partition the hypercube $Q_k$ into two
copies of $Q_{k-1}$ where corresponding vertices in each copy
are connected by and edge. If $b\in\{0,1\}$ and 
$B_b= \{\tilde{x}\in \{0,1\}^k : x_k = b\}$
is the set of binary strings of length $k$ with $k$-th bit equal to $b$,
then clearly each of $Q_{k-1}^0 := Q_k[B_0]$ and $Q_{k-1}^1 := Q_k[B_1]$ 
are induced subgraphs isomorphic to $Q_{k-1}$, and
(i) $V(Q_k) = V(Q_{k-1}^0) \cup V(Q_{k-1}^1) = B_0\cup B_1$ is a partition
and (ii) $E(Q_k) = E(Q_{k-1}^0) \cup E(Q_{k-1}^1) \cup C_{k-1}$
is also a partition of the edges where
\[
C_{k-1} = \{ \{(\tilde{x},0),(\tilde{x},1)\} : \tilde{x} \in V(Q_{k-1})\}.
\]
For $b\in\{0,1\}$ and $\tilde{x}\in V(Q_{k-1})$,
the {\em copy} of $(\tilde{x},b)\in V(Q_{k-1}^b)$ is 
the vertex $(\tilde{x},1-b)\in V(Q_{k-1}^{1-b})$, and these
well be referred as {\em copies}. 
This decomposition of $Q_k$ will be denoted by 
$Q_k = Q_{k-1}^0 \boxplus Q_{k-1}^1$.

\section{Some properties of the upper boundary function}
\label{sec:UBF}

The following proposition on the {\em binomial representation} of an 
integer is stated in~\cite{Stanley} and in~\cite{Hibi},
and a simple proof by greedy algorithm can be found in the latter citation.
\begin{proposition}
\label{prp:binom-rep}
For $m, i\in \nats$ there is a {\em unique binomial representation (UBR)} 
of $m$ as 
\begin{equation}
\label{eqn:binom-rep}
m = \binom{n_i}{i} + \binom{n_{i-1}}{i-1} + \cdots + \binom{n_j}{j}
\end{equation}
where $n_i > n_{i-1} > \cdots > n_j \geq j\geq 1$. 
\end{proposition}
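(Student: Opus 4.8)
The plan is to establish existence and uniqueness separately, both by induction on $i$ (with an inner induction on $m$), mirroring the greedy-algorithm argument alluded to in the citation. For existence, given $m$ and $i$, let $n_i$ be the largest integer with $\binom{n_i}{i} \le m$; such an $n_i$ exists and satisfies $n_i \ge i$ because $\binom{i}{i} = 1 \le m$, and the sequence $\binom{i}{i}, \binom{i+1}{i}, \binom{i+2}{i}, \dots$ is strictly increasing and unbounded. Set $m' = m - \binom{n_i}{i}$. If $m' = 0$ we are done with a one-term representation; otherwise $1 \le m' < \binom{n_i+1}{i} - \binom{n_i}{i} = \binom{n_i}{i-1}$, and we apply the inductive hypothesis (on $i$, since $i-1 < i$) to get a UBR $m' = \binom{n_{i-1}}{i-1} + \cdots + \binom{n_j}{j}$ with $n_{i-1} > \cdots > n_j \ge j \ge 1$. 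The one inequality that needs checking is $n_i > n_{i-1}$: since $\binom{n_{i-1}}{i-1} \le m' < \binom{n_i}{i-1}$ and $\binom{\cdot}{i-1}$ is strictly increasing on integers $\ge i-1$, we get $n_{i-1} < n_i$, as required. Concatenating the terms yields the desired representation of $m$.

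For uniqueness, suppose $m = \binom{n_i}{i} + \binom{n_{i-1}}{i-1} + \cdots = \binom{n_i'}{i} + \binom{n_{i-1}'}{i-1} + \cdots$ are two UBRs. The crux is the claim that the leading term is forced: if $n_i > n_{i-1} > \cdots > n_j \ge j$, then
\[
\binom{n_i}{i} \le m \le \binom{n_i}{i} + \binom{n_i - 1}{i-1} + \cdots + \binom{n_i - i + 1}{1} = \binom{n_i + 1}{i} - 1 < \binom{n_i + 1}{i},
\]
where the middle inequality uses that the remaining indices are strictly decreasing hence bounded above by $n_i - 1, n_i - 2, \ldots$, and the penultimate equality is the hockey-stick identity $\sum_{t=1}^{i}\binom{n_i - i + t}{t} = \binom{n_i+1}{i} - 1$ (a telescoping of $\binom{r+1}{s} - \binom{r}{s} = \binom{r}{s-1}$). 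Thus $n_i$ is characterized as the unique integer with $\binom{n_i}{i} \le m < \binom{n_i+1}{i}$, so $n_i = n_i'$. Subtracting the common leading term reduces to a UBR of $m - \binom{n_i}{i}$ in index $i-1$, and the inductive hypothesis finishes the job; the base case $i = 1$ is immediate since $m = \binom{n_1}{1} = n_1$ has only the obvious representation.

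The main obstacle — really the only nonroutine point — is the two-sided estimate on the tail of a representation, i.e. proving the hockey-stick bound $\binom{n_{i-1}}{i-1} + \cdots + \binom{n_j}{j} \le \binom{n_i}{i-1} - 1$ under the strict-decrease and lower-bound constraints on the indices. This is what simultaneously pins down the leading binomial coefficient (giving uniqueness) and certifies the inequality $n_i > n_{i-1}$ in the existence construction, so it deserves to be isolated as a small lemma and proved first. Everything else is bookkeeping with the inductions.
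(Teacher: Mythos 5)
Your proof is correct. The paper itself gives no proof of this proposition --- it only cites Stanley and Hibi and remarks that a greedy-algorithm proof appears in the latter --- and your argument is a complete, correct rendering of precisely that greedy approach, with the hockey-stick estimate $\binom{n_i}{i}+\binom{n_i-1}{i-1}+\cdots+\binom{n_i-i+1}{1}=\binom{n_i+1}{i}-1$ doing the essential work of pinning down the leading binomial coefficient for both the existence recursion and the uniqueness argument.
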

For $m, i\in \nats$ one can use the UBR 
to define the {\em upper $i$-boundary of $m$} 
\[
m^{(i)} = \binom{n_i}{i+1} + \binom{n_{i-1}}{i} + \cdots + \binom{n_j}{j+1}.
\]
\begin{proposition}
\label{prp:increasing}
For a fixed $i\in\nats$ the function $m\mapsto m^{(i)}$ is increasing.
\end{proposition}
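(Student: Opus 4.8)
The plan is to reduce the monotonicity to a one-step estimate: it suffices to show $m^{(i)}\le (m+1)^{(i)}$ for every $m\in\nats$, since chaining these inequalities gives $m^{(i)}\le (m')^{(i)}$ whenever $m\le m'$. So fix $m$, write its UBR $m=\binom{n_i}{i}+\binom{n_{i-1}}{i-1}+\cdots+\binom{n_j}{j}$ from Proposition~\ref{prp:binom-rep}, and ask how the UBR changes when $1$ is added. If $j\ge 2$ this is immediate: $m+1=\binom{n_i}{i}+\cdots+\binom{n_j}{j}+\binom{1}{1}$ is again a legal UBR (the new bottom index $1$ satisfies $1<j\le n_j$ and $1\ge 1$), so by definition
\[
(m+1)^{(i)}=\binom{n_i}{i+1}+\cdots+\binom{n_j}{j+1}+\binom{1}{2}=m^{(i)},
\]
and in fact equality holds.

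The substantive case is $j=1$, where incrementing the bottom term $\binom{n_1}{1}$ may trigger a chain of carries. First I would isolate the maximal $s\ge 0$ such that the UBR of $m$ contains the ``staircase'' suffix $\binom{n_1}{1},\binom{n_1+1}{2},\ldots,\binom{n_1+s}{s+1}$, that is, $n_t=n_1+(t-1)$ for $t=1,\ldots,s+1$; maximality forces either $s+1=i$ or $n_{s+2}>n_1+s+1$, so the part of the UBR above the staircase is unaffected. Iterating Pascal's rule $\binom{a}{t}+\binom{a}{t-1}=\binom{a+1}{t}$ collapses the staircase, yielding $m+1=\binom{n_i}{i}+\cdots+\binom{n_{s+2}}{s+2}+\binom{n_1+s+1}{s+1}$ (with no term above the staircase when $s+1=i$); this encodes the identity $\sum_{t=1}^{s+1}\binom{n_1+t-1}{t}=\binom{n_1+s+1}{s+1}-1$. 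Comparing $m^{(i)}$ with $(m+1)^{(i)}$ term by term, the untouched top contributions cancel and one is left with
\[
(m+1)^{(i)}-m^{(i)}=\binom{n_1+s+1}{s+2}-\sum_{t=1}^{s+1}\binom{n_1+t-1}{t+1}.
\]
By the telescoping identity $\sum_{t=1}^{s+1}\binom{n_1+t-1}{t+1}=\binom{n_1+s+1}{s+2}-n_1$ (again a one-line induction on $s$ via Pascal's rule), this difference equals $n_1\ge j=1>0$. Hence $m^{(i)}<(m+1)^{(i)}$ in this case, and combining with the case $j\ge 2$ we get $m^{(i)}\le(m+1)^{(i)}$ for all $m$, which is the proposition. (Note that the two cases together also show the function is only non-decreasing, not strictly increasing: equality occurs exactly when the UBR of $m$ ends below dimension $1$.)

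The only delicate point is the bookkeeping when $j=1$: correctly locating the staircase suffix and verifying the two telescoping binomial identities; nothing beyond Pascal's rule enters. An alternative, more conceptual route would be to first establish that $m^{(i)}$ equals the number of $(i+1)$-element subsets of $\nats$ all of whose $i$-element subsets occur among the first $m$ sets in colexicographic order; monotonicity is then transparent because initial colex segments are nested. However, proving that description requires essentially the same analysis of carries, so it is not genuinely shorter.
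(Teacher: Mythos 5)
Your argument follows the same route as the paper: reduce to the one-step inequality $m^{(i)}\le(m+1)^{(i)}$, split on whether the bottom lower index $j$ of the UBR of $m$ is $\ge 2$ or $=1$, and in the latter case collapse the maximal ``staircase'' suffix via Pascal's rule to obtain $(m+1)^{(i)}-m^{(i)}=n_1>0$. Your staircase bookkeeping and both telescoping identities check out.

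There is, however, a slip in the easy case $j\ge 2$. You claim that $m+1=\binom{n_i}{i}+\cdots+\binom{n_j}{j}+\binom{1}{1}$ is ``again a legal UBR.'' It is not when $j\ge 3$: Proposition~\ref{prp:binom-rep} requires the lower indices in the representation to run \emph{consecutively} from $i$ down to some $j'$, and your expression jumps from $j$ straight to $1$. The correct UBR of $m+1$ in this case appends $\binom{j-1}{j-1}$ (so $n_{j-1}=j-1$), which keeps the lower indices consecutive, respects $n_j>n_{j-1}=j-1$ because $n_j\ge j$, and satisfies $n_{j-1}\ge j-1$; this is what the paper does. Since $m^{(i)}$ is \emph{defined} through the unique UBR, you cannot read off $(m+1)^{(i)}$ from a non-UBR expansion, even if it happens to sum to $m+1$. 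Once fixed, the conclusion is unchanged: the appended term contributes $\binom{j-1}{j}=0$ to $(m+1)^{(i)}$, exactly as your $\binom{1}{2}=0$ did, so equality $(m+1)^{(i)}=m^{(i)}$ still holds. Everything else in your proof is correct and essentially coincides with the paper's argument, including the final observation that $m\mapsto m^{(i)}$ is strictly increasing precisely when the UBR of $m$ bottoms out at lower index $1$.
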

\begin{proof}
For $m,i\in\nats$ consider the UBR of $m$ as in (\ref{eqn:binom-rep}).

If $j\geq 2$, then 
\[
m+1 = \binom{n_i}{i} + \binom{n_{i-1}}{i-1} + \cdots + \binom{n_j}{j}
+\binom{j-1}{j-1}
\]
is the UBR of $m+1$ and so
$(m+1)^{(i)} = m^{(i)} + \binom{j-1}{j} = m^{(i)}$.

Otherwise $j=1$, and hence there is a largest index $\ell\in[i]$
such that $n_h = n_1+h-1$ for all $h\in\{1,\ldots,\ell\}$. In this
case we have $n_{\ell + 1} > n_{\ell}+1 = n_1 + \ell$ and 
\begin{eqnarray*}
m+1 & = & 
\binom{n_i}{i} + \binom{n_{i-1}}{i-1} + \cdots + \binom{n_{\ell+1}}{\ell+1} 
+ \binom{n_1 +\ell -1}{\ell} + \ldots + \binom{n_1}{1} + 1 \\
   & = & 
\binom{n_i}{i} + \binom{n_{i-1}}{i-1} + \cdots + \binom{n_{\ell+1}}{\ell+1}
+ \binom{n_1+\ell}{\ell}
\end{eqnarray*}
and hence 
\[
(m+1)^{(i)} - m^{(i)} = \binom{n_1+\ell}{\ell+1} 
- \left[\binom{n_1+\ell-1}{\ell+1} + \cdots + \binom{n_1}{2}\right] = 
\binom{n_1}{1} = n_1.
\]
\end{proof}
We see from the above proof when exactly the function $m\mapsto m^{(i)}$ 
is strictly increasing; namely, whenever the last binomial coefficient
in the UBR of $m$ has the form $\binom{n_1}{1}$, then 
$(m+1)^{(i)} = m^{(i)} + n_1 > m^{(i)}$. In particular, for $i < n$ we
have 
\[
\binom{n}{i} - 1 = \binom{n-1}{i} + \binom{n-2}{i-1} + \cdots + \binom{n-i}{1}
\]
and hence the following observation.
\begin{observation}
\label{obs:strict}
For $i,n\in\nats$ with $i<n$ then
\[
\left(\binom{n}{i} - 1\right)^{(i)} = \binom{n}{i}^{(i)} - (n-i) 
< \binom{n}{i}^{(i)}.
\]
\end{observation}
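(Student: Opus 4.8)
The statement is essentially a restatement of the remark made in the paragraph just above it, so the plan is short. First I would record \emph{the} UBR of $\binom{n}{i}-1$: since $i<n$, the top indices in
\[
\binom{n}{i} - 1 = \binom{n-1}{i} + \binom{n-2}{i-1} + \cdots + \binom{n-i}{1}
\]
strictly decrease down to $n-i\geq 1$, so by Proposition~\ref{prp:binom-rep} this really is the unique binomial representation, and the identity itself is just Pascal's rule iterated on the lowest-degree term until $\binom{n-i}{0}=1$ is split off. Its last coefficient has the form $\binom{n_1}{1}$ with $n_1=n-i$, so the remark distilled from the proof of Proposition~\ref{prp:increasing} --- that $(m+1)^{(i)}=m^{(i)}+n_1$ precisely when the UBR of $m$ ends in $\binom{n_1}{1}$ --- applies with $m=\binom{n}{i}-1$ and gives $\binom{n}{i}^{(i)}=\bigl(\binom{n}{i}-1\bigr)^{(i)}+(n-i)$ at once. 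Since $i<n$ forces $n-i\geq 1>0$, the strict inequality follows.

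If one prefers a self-contained computation, I would instead unwind both upper boundaries by hand. From the UBR above, the definition of the upper $i$-boundary gives
\[
\Bigl(\binom{n}{i}-1\Bigr)^{(i)} = \binom{n-1}{i+1} + \binom{n-2}{i} + \cdots + \binom{n-i}{2},
\]
while the trivial UBR of $\binom{n}{i}$ gives $\binom{n}{i}^{(i)}=\binom{n}{i+1}$. Running the same Pascal cascade on $\binom{n}{i+1}$ yields
\[
\binom{n}{i+1} = \binom{n-1}{i+1} + \binom{n-2}{i} + \cdots + \binom{n-i}{2} + \binom{n-i-1}{1} + \binom{n-i-1}{0},
\]
whose first $i$ summands are exactly $\bigl(\binom{n}{i}-1\bigr)^{(i)}$; subtracting them leaves $\binom{n-i-1}{1}+\binom{n-i-1}{0}=(n-i-1)+1=n-i$, as claimed.

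I do not anticipate a genuine obstacle: the result is pure bookkeeping with Pascal's rule. The only points needing (light) care are verifying that the displayed sum is a \emph{legitimate} UBR --- which is exactly where the hypothesis $i<n$ enters --- and keeping the off-by-one straight, so that the relevant quantity is $n_1=n-i$ rather than $n$. One could also note, for the first route, that $m=\binom{n}{i}-1$ falls into the extreme case $\ell=i$ of the proof of Proposition~\ref{prp:increasing} (every $n_h$ obeys $n_h=n_1+h-1$), where that proof was phrased via a term $n_{\ell+1}$ that here does not exist; the conclusion $(m+1)^{(i)}=m^{(i)}+n_1$ nonetheless holds, as the explicit cascade above confirms.
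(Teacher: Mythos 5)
Your first route is precisely the paper's argument: the UBR of $\binom{n}{i}-1$ ends in $\binom{n-i}{1}$, so the remark distilled from the proof of Proposition~\ref{prp:increasing} gives the difference $n-i$ at once, and your second route is an equivalent Pascal-cascade check of the same identity. Your aside about the $\ell=i$ edge case in Proposition~\ref{prp:increasing} is a fair catch --- the displayed sums involving $n_{\ell+1}$ become empty there --- but, as you note, the conclusion $(m+1)^{(i)}=m^{(i)}+n_1$ survives unchanged, so the observation follows.
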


In this article the {\em $f$-vector} of a $(d-1)$-dimensional 
simplicial complex $\Delta$ will be given by 
$\tilde{f}(\Delta) = \tilde{f} = (f_{-1},f_0,f_1,\ldots, f_{d-1}) 
\in {\ints}^{d+1}$
where $f_i = f_i(\Delta)$ denotes the number of $i$-dimensional faces of 
$\Delta$. For convenience we include the empty face $\emptyset$ in $\Delta$.
Since by convention $\dim(\emptyset) = -1$ then
we always have $f_{-1} = 1$. 
The following celebrated result proved independently by 
Kruskal~\cite{Kruskal}, Katona~\cite{Katona} and 
Sch\"{u}tzenberger~\cite{Shuetz}, is usually called
the {\em Kruskal-Katona Theorem}, since it was not realized
at first that Sch\"{u}tzenberger had the first proof.
It is sometimes called the {\em KKS Theorem} for short.
\begin{theorem} 
\label{thm:KKS}
An integral vector 
$\tilde{f} = (f_{-1},f_0,f_1,\ldots, f_{d-1}) \in {\ints}^{d+1}$
is an $f$-vector of a $(d-1)$-dimensional simplicial complex $\Delta$ 
if and only if $0 < f_i \leq f_{i-1}^{(i)}$ for each $i\in \{1,\ldots,d-1\}$.
\end{theorem}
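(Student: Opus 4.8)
The plan is to reduce Theorem~\ref{thm:KKS} to the classical \emph{shadow} form of the Kruskal--Katona theorem and then use the binomial‑representation arithmetic already in place (Propositions~\ref{prp:binom-rep} and~\ref{prp:increasing} and Observation~\ref{obs:strict}) to translate between shadows and the upper boundary map $m\mapsto m^{(i)}$. First, unwind the definitions: a $(d-1)$-dimensional simplicial complex $\Delta$ on a vertex set $V$ is exactly a tower of uniform families $\mathcal F_{-1}=\{\emptyset\},\mathcal F_0,\dots,\mathcal F_{d-1}$ with $\mathcal F_i\subseteq\binom{V}{i+1}$ (the $i$-faces, viewed as $(i+1)$-element vertex sets) that is downward closed; since closure under deleting a single vertex forces full downward closure, the single requirement is $\partial\mathcal F_i\subseteq\mathcal F_{i-1}$ for all $i$, where $\partial\mathcal G=\{A:|A|=|B|-1,\ A\subseteq B\text{ for some }B\in\mathcal G\}$ is the lower shadow. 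Then $f_{-1}=1$ automatically, $f_i=|\mathcal F_i|$, and the theorem becomes: a positive sequence $(f_0,\dots,f_{d-1})$ is realised by such a tower iff $f_i\le f_{i-1}^{(i)}$ for every $i\in\{1,\dots,d-1\}$.

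\emph{Necessity.} Fix $i$ and let $\partial^{*}_i(m)$ be the minimum of $|\partial\mathcal F|$ over $\mathcal F\subseteq\binom{V}{i+1}$ with $|\mathcal F|=m$. The shadow form of Kruskal--Katona says this minimum is attained by the initial segment of length $m$ in the colexicographic order on $\binom{V}{i+1}$ and, for the UBR $m=\binom{b_{i+1}}{i+1}+\cdots+\binom{b_t}{t}$, equals $\binom{b_{i+1}}{i}+\cdots+\binom{b_t}{t-1}$. From $\partial\mathcal F_i\subseteq\mathcal F_{i-1}$ we get $f_{i-1}\ge|\partial\mathcal F_i|\ge\partial^{*}_i(f_i)$, so it suffices to note the equivalence $\partial^{*}_i(m')\le m\iff m'\le m^{(i)}$. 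One implication is the explicit colex shadow computation carried out below in the sufficiency part (the colex segment of length $m^{(i)}$ has shadow of size $\le m$); the reverse implication is the routine observation that $m\mapsto m^{(i)}$ and the Kruskal--Katona shadow function $\partial^{*}_i$ are mutually inverse modulo the vanishing terms $\binom{n}{n+1}=0$ --- the monotonicity of Proposition~\ref{prp:increasing} and the strictness statement of Observation~\ref{obs:strict} are precisely what make this bookkeeping clean. Hence $f_i\le f_{i-1}^{(i)}$. The shadow form itself I would either cite or prove by the standard compression argument: iterate the $(i,j)$-shift operators on $\mathcal F_i$, verify that a shift never increases $|\partial\mathcal F_i|$, and evaluate the bound on the resulting compressed family, which agrees with a colex initial segment except in the lowest coordinates.

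\emph{Sufficiency.} Given $(f_0,\dots,f_{d-1})$ with $0<f_i\le f_{i-1}^{(i)}$, set $\mathcal F_{-1}=\{\emptyset\}$ and let $\mathcal F_i$ be the initial segment of length $f_i$ in the colexicographic order on $\binom{\nats}{i+1}$. Two facts make $\Delta=\bigcup_i\mathcal F_i$ a simplicial complex with the prescribed face numbers. First, the lower shadow of a colex initial segment is again a colex initial segment: peel off the block $\binom{[b]}{i+1}$ of $(i+1)$-sets contained in $[b]$ (whose shadow is $\binom{[b]}{i}$), then the block of $(i+1)$-sets with maximum element $b+1$, and induct. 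Second, running this computation from the UBR of $m$ shows that the shadow of the first $m^{(i)}$ sets of $\binom{\nats}{i+1}$ is a colex initial segment of $\binom{\nats}{i}$ of size at most $m$. Applying this with $m=f_{i-1}$ and using $f_i\le f_{i-1}^{(i)}$ together with monotonicity of $\partial$ under inclusion gives $\partial\mathcal F_i\subseteq\partial(\text{first }f_{i-1}^{(i)}\text{ sets})\subseteq\text{first }f_{i-1}\text{ sets}=\mathcal F_{i-1}$. Thus every tower condition holds, $\Delta$ is a simplicial complex, $f_i(\Delta)=f_i$, and $f_{-1}=1>0$, as required.

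\emph{Main obstacle.} The combinatorial core is the shadow form of Kruskal--Katona --- the compression/shifting lemma that an $(i,j)$-shift does not increase the shadow, together with the structural description of a fully shifted family --- and this is where any real work lies; the colex shadow computation and everything else is bookkeeping with binomial representations. The one subtle point in that bookkeeping is the behaviour of the UBR and of $m\mapsto m^{(i)}$ when the UBR of $m$ has trailing terms $\binom{n}{n}$ (so the corresponding $m^{(i)}$ terms $\binom{n}{n+1}$ vanish and both $m\mapsto m^{(i)}$ and $\partial^{*}_i$ have plateaus): one must check that these plateaus line up so that $\partial^{*}_i(m')\le m\iff m'\le m^{(i)}$ holds exactly --- which is exactly what Proposition~\ref{prp:increasing} and Observation~\ref{obs:strict} are engineered to deliver.
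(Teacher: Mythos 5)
The paper does not actually prove Theorem~\ref{thm:KKS}; it explicitly defers to Kruskal, Katona, Sch\"utzenberger, and Stanley, and only records the three-way equivalence (``$\tilde f$ is an $f$-vector'' $\Leftrightarrow$ ``$\Delta_{\tilde f}$ is a complex'' $\Leftrightarrow$ ``$f_i\le f_{i-1}^{(i)}$'') together with the remark that $1\Rightarrow 2$ is the hard implication. Your sketch is a correct realization of that same standard argument. In the sufficiency direction you prove precisely $3\Rightarrow 2$ (colex initial segments are closed under taking lower shadows, plus the binomial bookkeeping showing $\partial$ of the first $f_{i-1}^{(i)}$ $(i+1)$-sets lands inside the first $f_{i-1}$ $i$-sets), which together with the trivial $2\Rightarrow 1$ gives one direction. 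In the necessity direction you prove $1\Rightarrow 3$ by invoking the lower-shadow (compression) form of Kruskal--Katona and translating $\partial^{*}_i$ into the upper boundary map $m\mapsto m^{(i)}$; that compression lemma is exactly the ``hard part $1\Rightarrow 2$'' the paper alludes to, and you are right to flag it as the place where the real work lives and to either cite it or supply the shifting argument.

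Two small accounting points are worth tightening if you were to write this out in full. First, the ``reverse lexicographical order'' in the paper is the colexicographic (squashed) order; you implicitly identify them, which is correct, but it is worth saying so since the name could mislead. Second, the equivalence $\partial^{*}_i(m')\le m\iff m'\le m^{(i)}$ is asserted as routine, and while it is true, the plateau behaviour when the UBR of $m$ ends in terms $\binom{n_j}{j}$ with $n_j=j$ (so $\binom{n_j}{j+1}=0$ and the UBR of $m^{(i)}$ drops terms) needs a sentence: one should check that $\partial^{*}_i(m^{(i)})\le m$ while $\partial^{*}_i(m^{(i)}+1)>m$, and the second inequality is precisely where the strictness phenomenon of Observation~\ref{obs:strict} enters. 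With those two clarifications your proposal is a complete, correct proof modulo the compression lemma, and takes the same route as the sources the paper cites; it simply does the work the paper chose to skip.
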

Although we will not regurgitate the proof of Theorem~\ref{thm:KKS}
here, a few comments about it will be useful for us here
in this section. -- Note that a simplicial complex $\Delta$
on vertices $V = \{v_1,\ldots,v_n\}$ can be viewed 
as an abstract simplicial complex; a collection of subsets
of $[n]$ satisfying 
(1) $\{i\} \in \Delta$ for each $i\in [n]$, and 
(2) $F\subseteq G \in \Delta \Rightarrow F\in \Delta$. 
For each $i$ we can linearly order the $i$-element subsets of $\nats$ 
in the {\em reverse lexicographical order}. So for $i=3$ the order
would start as follows:
\[
\begin{array}{l}
\{1,2,3\}, \{1,2,4\}, \{1,3,4\}, \{2,3,4\}, \{1,2,5\}, \{1,3,5\}, \{2,3,5\}, \\
\{1,4,5\}, \{2,4,5\}, \{3,4,5\}, \{1,2,6\}, \{1,3,6\}, \{2,3,6\}, \ldots
\end{array}
\]
For an integral vector 
$\tilde{f} = (f_{-1},f_0,f_1,\ldots, f_{d-1}) \in {\ints}^{d+1}$
let $\Delta_{\tilde{f}}\subseteq 2^{\nats}$ consist of the first $f_{i-1}$ 
$i$-element subsets of $\nats$ in the
reverse lexicographical ordering for each $i\in\{0,1,\ldots,d\}$. The proof
of Theorem~\ref{thm:KKS} is based on proving the equivalence of
the following three statements~\cite{Stanley}.
\begin{enumerate}
  \item The integral vector $\tilde{f}$ is an $f$-vector of a simplicial 
complex $\Delta$.
  \item $\Delta_{\tilde{f}}$ is a simplicial complex.
  \item $f_i \leq f_{i-1}^{(i)}$ for each $i\in\{1,\ldots,d-1\}$.
\end{enumerate}
The hard part of the proof is the implication $1\Rightarrow 2$.

For a fixed $i$ we have a well-defined function $m\mapsto m^{(i)}$
which we will refer to as 
the {\em upper boundary function}\footnote{as a function this
has been called the ``pseudo power function''~\cite{jim}. It is similar
to the ``upper boundary operator''~\cite{Grunbaum}.} or the 
{\em UB} function for short.
The remainder of this section will be devoted to the derivation
of some properties of the UB function.
We will, in part, use the above equivalence to prove these properties.

For one such property of the UB function, let $i,m_1,m_2,N\in\nats$
be such that $m_1,m_2\leq\binom{N}{i}$ and consider
two integral vectors in ${\ints}^{i+2}$
\begin{eqnarray*}
\tilde{f}_1 & = & 
\left(\binom{N}{0},\ldots,\binom{N}{i-1},m_1,m_1^{(i)}\right), \\
\tilde{f}_2 & = & 
\left(\binom{N}{0},\ldots,\binom{N}{i-1},m_2,m_2^{(i)}\right).
\end{eqnarray*}
By Theorem~\ref{thm:KKS} both $\Delta_{\tilde{f}_1}$ and $\Delta_{\tilde{f}_1}$
are simplicial complexes. Assume we have disjoint representations
$\Delta_{\tilde{f}_1}\subseteq 2^{[N]}$ and 
$\Delta_{\tilde{f}_2}\subseteq 2^{[2N]\setminus [N]}$ (where
in the latter representation $N$ has been added to each element
of each set in $\Delta_{\tilde{f}_2}$) and let 
$\Delta := \Delta_{\tilde{f}_1}\cup \Delta_{\tilde{f}_2}$ be their 
union. By definition $\Delta$ is clearly a simplicial complex with
$f_{i-1}(\Delta) = m_1+m_2$ and $f_i(\Delta) = m_1^{(i)}+m_2^{(i)}$. 
By Theorem~\ref{thm:KKS} we have the following.
\begin{claim}
\label{clm:sum-less}
$(m_1+m_2)^{(i)} \geq m_1^{(i)}+m_2^{(i)}$.
\end{claim}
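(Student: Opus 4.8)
The plan is to use exactly the construction already set up in the paragraph preceding the claim, and read off the inequality from the KKS characterization. First I would note that by Proposition~\ref{prp:binom-rep} together with Theorem~\ref{thm:KKS}, the two vectors $\tilde f_1$ and $\tilde f_2$ are genuine $f$-vectors; concretely $\Delta_{\tilde f_1}$ and $\Delta_{\tilde f_2}$ are $i$-dimensional simplicial complexes whose full $f$-vectors in codimension up to $i-1$ agree with the "complete" skeleton $\binom{N}{0},\ldots,\binom{N}{i-1}$, and whose number of $(i-1)$-faces and $i$-faces are $(m_j,m_j^{(i)})$ respectively. The point of starting the vectors with the complete prefix $\binom{N}{0},\ldots,\binom{N}{i-1}$ is merely to make them legitimate $f$-vectors so that Theorem~\ref{thm:KKS} applies; those top coordinates play no further role.

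Next I would take the disjoint union $\Delta := \Delta_{\tilde f_1} \cup \Delta_{\tilde f_2}$ on disjoint vertex sets $[N]$ and $[2N]\setminus[N]$, as described. Since no face of $\Delta$ uses vertices from both blocks, $\Delta$ is again a simplicial complex, and its face numbers in dimensions $i-1$ and $i$ simply add: $f_{i-1}(\Delta) = m_1 + m_2$ and $f_i(\Delta) = m_1^{(i)} + m_2^{(i)}$. (One can arrange $N$ large enough, or simply observe that the choice of $N$ with $m_1,m_2\le\binom{N}{i}$ is irrelevant to the resulting inequality, since $m_j^{(i)}$ depends only on $m_j$ and $i$, not on $N$.)

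Finally, apply the "only if" direction of Theorem~\ref{thm:KKS} to $\Delta$ at the index $i$: since $\tilde f(\Delta)$ is an $f$-vector we must have $f_i(\Delta) \le f_{i-1}(\Delta)^{(i)}$, i.e.
\[
m_1^{(i)} + m_2^{(i)} \;=\; f_i(\Delta) \;\le\; f_{i-1}(\Delta)^{(i)} \;=\; (m_1+m_2)^{(i)},
\]
which is the assertion of Claim~\ref{clm:sum-less}.

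There is essentially no obstacle here: the only thing to be careful about is the bookkeeping that makes $\Delta$ an honest simplicial complex, namely that the two summands live on disjoint vertex sets so that no new higher faces are created and the relevant face counts are exactly additive; the inequality is then immediate from KKS. (If one wanted to avoid invoking a common $N$, one could instead pick $N := \max\{\, n \text{ appearing in the UBRs of } m_1 \text{ and } m_2\,\}$, but this refinement is unnecessary for the argument.)
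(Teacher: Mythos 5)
Your proof is correct and is essentially the same as the paper's: form $\Delta$ as the disjoint union of the reverse-lexicographic complexes $\Delta_{\tilde f_1}$ and $\Delta_{\tilde f_2}$ on disjoint vertex sets, note that the face counts in dimensions $i-1$ and $i$ add, and apply the ``only if'' direction of Theorem~\ref{thm:KKS} at index $i$. One nit in your final parenthetical: taking $N$ to be the largest entry appearing in the UBRs of $m_1$ and $m_2$ would not in general give $m_j\le\binom{N}{i}$, since $m_j\ge\binom{n_i}{i}$ (often strictly) for the leading term $\binom{n_i}{i}$ of the UBR; but as you say, that alternative choice is inessential, and any sufficiently large $N$ works.
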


Let ${\cal{F}}_i(N)$ denote the first $N$ elements of of $\binom{\nats}{i}$.
We clearly have then (i) $|{\cal{F}}_i(N)| = N$ for $i\geq 1$,
(ii) ${\cal{F}}_i(N_1) \subseteq {\cal{F}}_i(N_2)$ iff $N_1\leq N_2$,
(iii) ${\cal{F}}_i\left(\binom{k}{i}\right) = \binom{[k]}{i}$, and
by definition of $\Delta_{\tilde{f}}$ here above we have
for an $f$-vector $\tilde{f}$ of a simplicial complex that (iv) 
\[
\Delta_{\tilde{f}} = 
{\cal{F}}_0(f_{-1})\cup{\cal{F}}_1(f_{0})\cup\cdots\cup{\cal{F}}_d(f_{d-1}).
\]
(v) Finally note that if $|X| = i$ and $|Y| = i+1$, then 
by Theorem~\ref{thm:KKS} we have that
$X\subseteq Y\in {\cal{F}}_{i+1}(N^{(i)})$ implies that $X\in {\cal{F}}_i(N)$.

For $N\in\nats$, $m_1\leq\binom{N}{i}$ and $m_2\leq \binom{N}{i-1}$
let $\mu = \min(m_2^{(i-1)},m_1)$ and consider two integral vectors
\begin{eqnarray*}
\tilde{f}_1 & = & 
\left(\binom{N}{0},\binom{N}{1}\ldots,\binom{N}{i-1},m_1,m_1^{(i)}\right) 
\in{\ints}^{i+2}, \\
\tilde{f}_2 & = & 
\left(\binom{N}{0},\ldots,\binom{N}{i-2},m_2,\mu\right)
\in{\ints}^{i+1}.
\end{eqnarray*}
By the Theorem~\ref{thm:KKS} both $\Delta_{\tilde{f}_1}$ and $\Delta_{\tilde{f}_2}$
are simplicial complexes of dimensions $i$ and $i-1$ respectively.
Assume we have abstract representations 
$\Delta_{\tilde{f}_1} \subseteq 2^{[N]}$ and 
$\Delta_{\tilde{f}_2} \subseteq 2^{[N]}$ and
let 
\begin{equation}
\label{eqn:D}
\Delta = \Delta_{\tilde{f}_1} \cup (\Delta_{\tilde{f}_2}\uplus \{N+1\})
\subseteq 2^{[N+1]}.
\end{equation}
\begin{claim}
\label{clm:D=sc}
$\Delta$ from (\ref{eqn:D}) is a simplicial complex of dimension $i$.
\end{claim}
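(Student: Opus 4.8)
The plan is to prove the claim in three steps: (1) establish the containment $\Delta_{\tilde{f}_2}\subseteq\Delta_{\tilde{f}_1}$; (2) use this, together with the fact (noted just above, via Theorem~\ref{thm:KKS}) that $\Delta_{\tilde{f}_1}$ and $\Delta_{\tilde{f}_2}$ are themselves simplicial complexes, to verify that $\Delta$ is closed under taking subsets; and (3) count dimensions. The real content is step~(1), and it is exactly there that the choice $\mu=\min(m_2^{(i-1)},m_1)$ gets used.

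For step~(1) I would write both complexes in the initial-segment form~(iv), namely $\Delta_{\tilde{f}_1}={\cal{F}}_0(\binom{N}{0})\cup\cdots\cup{\cal{F}}_{i-1}(\binom{N}{i-1})\cup{\cal{F}}_i(m_1)\cup{\cal{F}}_{i+1}(m_1^{(i)})$ and $\Delta_{\tilde{f}_2}={\cal{F}}_0(\binom{N}{0})\cup\cdots\cup{\cal{F}}_{i-2}(\binom{N}{i-2})\cup{\cal{F}}_{i-1}(m_2)\cup{\cal{F}}_i(\mu)$, and compare the summands dimension by dimension. For every $j\leq i-2$ the two summands coincide. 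For $j=i-1$: since $m_2\leq\binom{N}{i-1}$, monotonicity~(ii) together with~(iii) gives ${\cal{F}}_{i-1}(m_2)\subseteq{\cal{F}}_{i-1}(\binom{N}{i-1})=\binom{[N]}{i-1}\subseteq\Delta_{\tilde{f}_1}$. For $j=i$: since $\mu\leq m_1$, monotonicity~(ii) gives ${\cal{F}}_i(\mu)\subseteq{\cal{F}}_i(m_1)\subseteq\Delta_{\tilde{f}_1}$. Taking the union over $j$ yields $\Delta_{\tilde{f}_2}\subseteq\Delta_{\tilde{f}_1}$.

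For step~(2), take $F\subseteq G\in\Delta$. If $G\in\Delta_{\tilde{f}_1}$ then $F\in\Delta_{\tilde{f}_1}\subseteq\Delta$ because $\Delta_{\tilde{f}_1}$ is a simplicial complex. Otherwise $G=H\cup\{N+1\}$ for some $H\in\Delta_{\tilde{f}_2}\subseteq 2^{[N]}$; if $N+1\in F$ then $F\setminus\{N+1\}\subseteq H$, so $F\setminus\{N+1\}\in\Delta_{\tilde{f}_2}$ and hence $F\in\Delta_{\tilde{f}_2}\uplus\{N+1\}\subseteq\Delta$, while if $N+1\notin F$ then $F\subseteq H$, so $F\in\Delta_{\tilde{f}_2}\subseteq\Delta_{\tilde{f}_1}\subseteq\Delta$ by step~(1). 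Since moreover $\emptyset\in\Delta_{\tilde{f}_2}$ forces $\{N+1\}\in\Delta$ and $\Delta_{\tilde{f}_1}$ already contains $\emptyset$ and every singleton of $[N]$, $\Delta$ is a genuine simplicial complex on $[N+1]$. For step~(3): $\Delta_{\tilde{f}_1}$ has a face of size $i+1$ and all of its faces have size at most $i+1$, while every face of $\Delta_{\tilde{f}_2}\uplus\{N+1\}$ has size at most $(i-1)+1+1=i+1$; hence $\dim\Delta=i$. The one point needing genuine care is step~(1): recognizing that the definition of $\mu$ is tailored precisely so that $\mu\leq m_1$ lets the top layer of $\Delta_{\tilde{f}_2}$ sit inside $\Delta_{\tilde{f}_1}$ (the other inequality $\mu\leq m_2^{(i-1)}$ being what made $\Delta_{\tilde{f}_2}$ a complex in the first place); everything else is routine bookkeeping.
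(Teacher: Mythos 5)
Your proof is correct and takes essentially the same route as the paper: verify closure under taking subsets by a case analysis, relying on $\mu\leq m_2^{(i-1)}$ (so that $\Delta_{\tilde{f}_2}$ is itself a simplicial complex) and on $\mu\leq m_1$ (so that its top layer ${\cal F}_i(\mu)$ sits inside ${\cal F}_i(m_1)$). Your one organizational improvement is to isolate the containment $\Delta_{\tilde{f}_2}\subseteq\Delta_{\tilde{f}_1}$ as a stand-alone observation up front, which tidies the paper's three-way case split (on whether $N+1$ lies in $F$ and in $G$) into a two-step check.
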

\begin{proof}
For $\ell \in \{0,1,\ldots,i-1\}$ we have $\binom{[N+1]}{\ell}\subseteq \Delta$
and
\begin{eqnarray*}
\Delta\cap\binom{[N+1]}{i} & = & 
  {\cal{F}}_i(m_1)\cup ({\cal{F}}_{i-1}(m_2)\uplus \{N+1\}) \\
\Delta\cap\binom{[N+1]}{i+1} & = & 
  {\cal{F}}_{i+1}(m_1^{(i)})\cup ({\cal{F}}_i(\mu)\uplus \{N+1\}) 
\end{eqnarray*}
We only need to check 
$F\subseteq G\in \Delta\Rightarrow F\in\Delta$ for 
$F\in \Delta\cap\binom{[N+1]}{i}$ and $G\in \Delta\cap\binom{[N+1]}{i+1}$.
Here there are three cases to consider.

(a) $N+1\not\in F,G$:
Here we have $F\subseteq G\in {\cal{F}}_{i+1}(m_1^{(i)})$ and hence
$F\in {\cal{F}}_i(m_1)\subseteq \Delta$.

(b) $N+1\in F,G$:
Here $G\in {\cal{F}}_i(\mu)\uplus \{N+1\}$ and hence
$G\setminus\{N+1\} \in {\cal{F}}_i(\mu)\subseteq {\cal{F}}_i(m_2^{(i-1)})$
Since 
$F\setminus\{N+1\}\subseteq G\setminus\{N+1\}\in {\cal{F}}_i(m_2^{(i-1)})$,
we have $F\setminus\{N+1\}\in {\cal{F}}_{i-1}(m_2)$ and hence
$F\in {\cal{F}}_{i-1}(m_2)\uplus \{N+1\}\subseteq \Delta\cap\binom{[N+1]}{i}$ 
and so $F\in\Delta$.
 
(c) $N+1\not\in F$ and $N+1\in G$:
As in (b) we have $G\in {\cal{F}}_i(\mu)\uplus \{N+1\}$ and hence
$F = G\setminus\{N+1\} \in {\cal{F}}_i(\mu)\subseteq {\cal{F}}_i(m_1)
\subseteq \Delta\cap\binom{[N+1]}{i}$ and hence $F\in\Delta$.

Therefore we have $F\subseteq G\in \Delta\Rightarrow F\in\Delta$ for
all $F$ and $G$, and this completes the proof of the claim.
\end{proof}
For the $f$-vector of $\Delta$ in Claim~\ref{clm:D=sc} we have 
\[
f_{i-1}(\Delta) = 
|{\cal{F}}_i(m_1)| + |({\cal{F}}_{i-1}(m_2)\uplus \{N+1\})| = m_1 + m_2,
\]
and 
\[
f_i(\Delta) = 
|{\cal{F}}_i(m_1^{(i)})| + |({\cal{F}}_{i-1}(\mu)\uplus \{N+1\})| = 
m_1^{(i)} + \min(m_2^{(i-1)},m_1).
\]
By Theorem~\ref{thm:KKS} we obtain the following lemma
as a corollary.
\begin{lemma}
\label{lmm:1}
For $m_1,m_2,i\in\nats$ we have
\[
(m_1+m_2)^{(i)} \geq m_1^{(i)} + \min(m_2^{(i-1)},m_1).
\]
\end{lemma}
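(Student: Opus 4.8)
The plan is to obtain the inequality as an immediate application of the Kruskal--Katona Theorem (Theorem~\ref{thm:KKS}) to the simplicial complex $\Delta$ constructed in~(\ref{eqn:D}) and verified in Claim~\ref{clm:D=sc}. The surrounding text has in fact already assembled every ingredient, so the main work is to put them together and to shed the auxiliary hypothesis about the ambient ground set.

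First I would reduce to a setting in which the construction preceding Claim~\ref{clm:D=sc} is available. Given arbitrary $m_1,m_2,i\in\nats$ with $i\ge 2$, pick $N\in\nats$ large enough that $m_1\le\binom{N}{i}$ and $m_2\le\binom{N}{i-1}$; this is possible since for fixed $i\ge 2$ both $\binom{N}{i}$ and $\binom{N}{i-1}$ grow without bound as $N\to\infty$. (The boundary value $i=1$ is degenerate --- there $m_2^{(0)}$ must be read by a convention --- and is best treated separately or by direct computation.) With such an $N$ fixed, form $\Delta\subseteq 2^{[N+1]}$ exactly as in~(\ref{eqn:D}). By Claim~\ref{clm:D=sc}, $\Delta$ is a simplicial complex of dimension at most $i$, and the $f$-vector computation already carried out after Claim~\ref{clm:D=sc} gives $f_{i-1}(\Delta)=m_1+m_2$ and $f_i(\Delta)=m_1^{(i)}+\min(m_2^{(i-1)},m_1)$.

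Next I would apply Theorem~\ref{thm:KKS}. If $f_i(\Delta)=0$, then the asserted inequality reads $(m_1+m_2)^{(i)}\ge 0$ and is trivial, so assume $f_i(\Delta)>0$, i.e.\ $\Delta$ has dimension exactly $i$. Then $\Delta$ is a $(d-1)$-dimensional simplicial complex with $d=i+1$, so Theorem~\ref{thm:KKS} yields $f_j(\Delta)\le f_{j-1}(\Delta)^{(j)}$ for every $j\in\{1,\dots,i\}$; specializing to $j=i$ gives
\[
m_1^{(i)}+\min(m_2^{(i-1)},m_1)=f_i(\Delta)\le f_{i-1}(\Delta)^{(i)}=(m_1+m_2)^{(i)},
\]
which is precisely the claim.

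I expect no genuine obstacle here: the combinatorial substance --- that the union defining $\Delta$ is closed under taking subsets --- is exactly what Claim~\ref{clm:D=sc} provides, and what remains is $f$-vector bookkeeping together with a single invocation of Theorem~\ref{thm:KKS}. The only points deserving a sentence of care are (i) enlarging the ground set so that the hypotheses $m_1\le\binom{N}{i}$ and $m_2\le\binom{N}{i-1}$ hold, (ii) the degenerate case $f_i(\Delta)=0$, handled trivially above, and (iii) the boundary value $i=1$. This is why the text can legitimately present Lemma~\ref{lmm:1} as a corollary of Claim~\ref{clm:D=sc} and Theorem~\ref{thm:KKS}.
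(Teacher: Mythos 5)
Your proof is essentially identical to the paper's: the lemma is stated there as a direct corollary of Claim~\ref{clm:D=sc} and Theorem~\ref{thm:KKS}, via exactly the $f$-vector computation $f_{i-1}(\Delta)=m_1+m_2$, $f_i(\Delta)=m_1^{(i)}+\min(m_2^{(i-1)},m_1)$ that you reproduce. Your added remarks --- choosing $N$ large enough that $m_1\le\binom{N}{i}$ and $m_2\le\binom{N}{i-1}$, dispatching the degenerate case $f_i(\Delta)=0$, and flagging the boundary value $i=1$ where the construction of $\tilde f_2$ degenerates --- are sound bookkeeping that the paper leaves implicit.
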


Let $m_1,m_2,i,N\in\nats$ be such that $m_1+m_2\leq\binom{N}{i-1}$.
By Lemma~\ref{lmm:1} we get
\[
\left(m_1+m_2+\binom{N}{i}\right)^{(i)}\geq
\binom{N}{i+1} + \min\left((m_1+m_2)^{(i-1)},\binom{N}{i}\right).
\]
By assumption and Proposition~\ref{prp:increasing}
we have $(m_1+m_2)^{(i-1)} \leq \binom{N}{i}$ and hence
\[
\min\left((m_1+m_2)^{(i-1)},\binom{N}{i}\right) = (m_1+m_2)^{(i-1)}.
\]
By Claim~\ref{clm:sum-less} we therefore have the following.
\begin{lemma}
\label{lmm:2}
For $m_1,m_2,i,N\in\nats$ with $m_1+m_2\leq\binom{N}{i-1}$
we have 
\[
\left(m_1+m_2+\binom{N}{i}\right)^{(i)}\geq 
m_1^{(i-1)} + m_2^{(i-1)} + \binom{N}{i+1}.
\]
\end{lemma}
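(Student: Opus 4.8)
The plan is to chain together, in order, the three facts already established in this section --- Lemma~\ref{lmm:1}, Proposition~\ref{prp:increasing}, and Claim~\ref{clm:sum-less} --- with no further simplicial-complex construction required.

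First I would apply Lemma~\ref{lmm:1} with its first argument specialized to $\binom{N}{i}$ and its second argument to $m_1+m_2$; this is legitimate since Lemma~\ref{lmm:1} carries no side hypotheses. Using that the UBR of $\binom{N}{i}$ is the single term $\binom{N}{i}$, so that $\binom{N}{i}^{(i)} = \binom{N}{i+1}$, this gives
\[
\left(m_1+m_2+\binom{N}{i}\right)^{(i)} \geq \binom{N}{i+1} + \min\left((m_1+m_2)^{(i-1)},\binom{N}{i}\right).
\]
Next I would invoke the hypothesis $m_1+m_2 \leq \binom{N}{i-1}$ together with monotonicity of the UB function (Proposition~\ref{prp:increasing}) to conclude $(m_1+m_2)^{(i-1)} \leq \binom{N}{i-1}^{(i-1)} = \binom{N}{i}$, so the minimum on the right equals $(m_1+m_2)^{(i-1)}$. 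Finally, Claim~\ref{clm:sum-less} yields $(m_1+m_2)^{(i-1)} \geq m_1^{(i-1)} + m_2^{(i-1)}$, and substituting this into the displayed inequality gives exactly the asserted bound.

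There is essentially no obstacle here beyond two small bookkeeping points: one must place $\binom{N}{i}$ in the asymmetric first slot of Lemma~\ref{lmm:1} (so that the error term is the $\min$ involving $(m_1+m_2)^{(i-1)}$, which the hypothesis controls, rather than a $\min$ involving $\binom{N}{i}^{(i-1)}$), and one must notice that the assumption $m_1+m_2 \leq \binom{N}{i-1}$ is precisely what is needed to discard that $\min$. Everything else is a direct substitution.
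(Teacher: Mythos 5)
Your proof is correct and follows exactly the same three-step chain the paper uses: apply Lemma~\ref{lmm:1} with $\binom{N}{i}$ in the asymmetric slot, use the hypothesis together with Proposition~\ref{prp:increasing} to resolve the $\min$, and finish with Claim~\ref{clm:sum-less}.
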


Our final objective in this section is to prove
the following 
\begin{lemma}
\label{lmm:3}
If $0\leq m,m_1,m_2 \leq \binom{N}{i}$ and $m_1+m_2 = m +\binom{N}{i}$
then
\[
m_1^{(i)} + m_2^{(i)} \leq m^{(i)} + \binom{N}{i+1}.
\]
\end{lemma}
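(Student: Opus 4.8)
The plan is to set up the same kind of explicit simplicial-complex construction used in Claims~\ref{clm:sum-less} and~\ref{clm:D=sc}, but now run in the reverse direction: instead of building a complex whose $f$-vector is large, I want to build one that witnesses the \emph{upper bound} $m_1^{(i)}+m_2^{(i)} \le m^{(i)}+\binom{N}{i+1}$. The natural reformulation, using $m = m_1 + m_2 - \binom{N}{i}$, is to show that if we combine two complexes with $i$-face counts $m_1$ and $m_2$ on top of a full $(i-1)$-skeleton on $N$ vertices, the "overflow" above $\binom{N}{i}$ faces in the $i$-th layer can be pushed up so that the $(i+1)$-face count is controlled. Concretely, I would consider the vector
\[
\tilde f = \left(\binom{N}{0},\binom{N}{1},\ldots,\binom{N}{i-1},\binom{N}{i},\,m^{(i)}\right),
\]
which by Observation~\ref{obs:strict}/Theorem~\ref{thm:KKS} and the hypothesis $m\le\binom{N}{i}$ (so $m^{(i)}\le\binom{N}{i+1}$) is the $f$-vector of a simplicial complex — in fact essentially the full $i$-skeleton on $[N]$ together with $m^{(i)}$ of the $(i+1)$-subsets, i.e.\ $\Delta_{\tilde f} = \binom{[N]}{\le i}\cup {\cal F}_{i+1}(m^{(i)})$. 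The target inequality then says precisely that the first $m_1^{(i)}$ plus the first $m_2^{(i)}$ sets of $\binom{\nats}{i+1}$ can be accommodated as the $(i+1)$-faces of a complex whose $(i+1)$-face count is $m^{(i)} + \binom{N}{i+1}$.

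The cleanest route, I think, is to use Claim~\ref{clm:sum-less} (superadditivity of the UB function) in the contrapositive-flavoured way. Write $m_1 = \binom{N}{i} - a_1$ and $m_2 = \binom{N}{i} - a_2$ with $a_1,a_2\ge 0$; then $m = \binom{N}{i} - (a_1+a_2)$, so $a_1 + a_2 \le \binom{N}{i}$ and in particular $a_1,a_2,a_1+a_2$ are all legitimate integers in $[0,\binom{N}{i}]$. Now I would like a "complementary" identity: for $0\le a\le\binom{N}{i}$,
\[
\left(\binom{N}{i}-a\right)^{(i)} = \binom{N}{i+1} - a^{(i-1)}\quad(\text{or something of this shape}),
\]
obtained by pairing each $i$-subset of $[N]$ with its complement-in-order and tracking how the UB operator acts. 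Granting such an identity, the desired inequality $m_1^{(i)}+m_2^{(i)} \le m^{(i)} + \binom{N}{i+1}$ becomes
\[
2\binom{N}{i+1} - a_1^{(i-1)} - a_2^{(i-1)} \le \binom{N}{i+1} - (a_1+a_2)^{(i-1)} + \binom{N}{i+1},
\]
i.e.\ $(a_1+a_2)^{(i-1)} \le a_1^{(i-1)} + a_2^{(i-1)}$ — and that is exactly the \emph{reverse} of Claim~\ref{clm:sum-less} applied with index $i-1$. Since Claim~\ref{clm:sum-less} gives only one direction, I would instead run the direct simplicial-complex construction from the proof of Claim~\ref{clm:D=sc} with roles swapped to get the matching inequality $(a_1+a_2)^{(i-1)} \le a_1^{(i-1)} + a_2^{(i-1)}$ directly; equivalently, build a complex on $[N]$ whose $i$-faces are the last $a_1+a_2$ of $\binom{[N]}{i}$ and show its $(i+1)$-faces (which are forced subsets) number at least $a_1^{(i-1)}+a_2^{(i-1)}$ by a shifting/compression argument.

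The main obstacle I anticipate is establishing the complementation identity relating $\bigl(\binom{N}{i}-a\bigr)^{(i)}$ to $\binom{N}{i+1}-a^{(i-1)}$ cleanly: the UB function is defined via the greedy binomial representation, and complementing the first $m_1$ sets in reverse-lex order does \emph{not} map nicely to the first $a_1$ sets in reverse-lex order — it maps to a \emph{final segment}, and final segments require a separate (colex/lex duality) argument. So I expect the real work to be a lemma of the form: for $0\le a\le\binom{N}{i}$, the number of $(i+1)$-subsets of $[N]$ all of whose $i$-subsets lie among the last $a$ elements of $\binom{[N]}{i}$ (reverse-lex) equals $\binom{N}{i+1} - \bigl(\binom{N}{i}-a\bigr)^{(i)}$, proved by the standard compression argument. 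Once that bookkeeping lemma is in hand, the inequality in Lemma~\ref{lmm:3} drops out immediately from Theorem~\ref{thm:KKS} applied to the union-of-two-complexes construction, exactly as in Claims~\ref{clm:sum-less} and~\ref{clm:D=sc}, now interpreted on complements. If the complementation identity proves awkward to state in closed form, the fallback is to avoid it entirely: directly construct, as in the proof of Claim~\ref{clm:D=sc}, a single complex of dimension $i+1$ realizing $f_i = m^{(i)}+\binom{N}{i+1}$ that \emph{contains} the disjoint union $\Delta_{\tilde f_1}\sqcup\Delta_{\tilde f_2}$ in its $(i+1)$-skeleton, and read off $f_{i+1}\ge m_1^{(i)}+m_2^{(i)}$ together with the KKS inequality $m_1^{(i)}+m_2^{(i)} = f_{i+1} \le f_i^{(\,\cdot\,)}$ — but the complement viewpoint is the shorter of the two.
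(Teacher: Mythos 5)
Your reformulation via complements is where the argument breaks down. Write $a_1 = \binom{N}{i} - m_1$, $a_2 = \binom{N}{i} - m_2$ as you do, so $a_1 + a_2 = \binom{N}{i} - m$; you then appeal to a complementation identity of the shape $\bigl(\binom{N}{i}-a\bigr)^{(i)} = \binom{N}{i+1} - a^{(i-1)}$ and reduce Lemma~\ref{lmm:3} to $(a_1+a_2)^{(i-1)} \le a_1^{(i-1)} + a_2^{(i-1)}$. Both the identity and the reduced inequality are false. For the identity, take $N=4$, $i=2$, $a=1$: the left side is $5^{(2)}$, and since $5 = \binom{3}{2}+\binom{2}{1}$ we get $5^{(2)} = \binom{3}{3}+\binom{2}{2} = 2$, while the right side is $\binom{4}{3} - 1^{(1)} = 4 - 0 = 4$. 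For the reduced inequality, take the same $N,i$ and $a_1 = a_2 = 1$: then $(a_1+a_2)^{(1)} = 2^{(1)} = \binom{2}{2} = 1$ while $a_1^{(1)} + a_2^{(1)} = 0 + 0 = 0$, so the inequality fails in the wrong direction. Indeed Claim~\ref{clm:sum-less} says the upper-boundary function is superadditive, so the subadditive version you would need cannot hold except in degenerate cases; no compression argument will rescue it. The underlying issue is that complementing $S \mapsto [N]\setminus S$ does not carry initial reverse-lex segments of $\binom{[N]}{i}$ to initial reverse-lex segments of $\binom{[N]}{N-i}$ (they become final segments under a dual order), and it exchanges the upper-shadow operator $m \mapsto m^{(i)}$ for a genuinely different operator, the Kruskal--Katona lower-shadow function, not $a \mapsto a^{(i-1)}$.

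Your fallback of building one complex and applying Theorem~\ref{thm:KKS} also does not yield the stated bound. Mimicking Claim~\ref{clm:sum-less}, a disjoint-union complex with $f_{i-1} = m_1 + m_2 = m + \binom{N}{i}$ and $f_i = m_1^{(i)} + m_2^{(i)}$ only gives $m_1^{(i)} + m_2^{(i)} \le \bigl(m + \binom{N}{i}\bigr)^{(i)}$, and this right-hand side is in general strictly larger than $m^{(i)} + \binom{N}{i+1}$: for $N=4$, $i=2$, $m=6$ one has $12^{(2)}=11$ versus $6^{(2)}+\binom{4}{3}=8$. That gap is exactly the effect of superadditivity, which works against you here; the extra hypotheses $m_1, m_2 \le \binom{N}{i}$ are what make Lemma~\ref{lmm:3} true, and any construction that discards them cannot recover the bound. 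The paper's proof is of a genuinely different character: it does induction on $i$, reformulates $\mathbf{P}(i)$ as an equivalent statement $\mathbf{Q}(i)$ phrased in terms of the unique binomial representations of $m_1$ and $m_2$, and in the inductive step splits into two cases according to the size of the tails of those representations, invoking Claim~\ref{clm:sum-less}, Lemma~\ref{lmm:1}, and both induction hypotheses $\mathbf{P}(i-1)$ and $\mathbf{Q}(i-1)$; the base case $i=1$ reduces to the elementary inequality $m_1^2 + m_2^2 \le m^2 + N^2$.
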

To prove Lemma~\ref{lmm:3} we let 
$\mathbf{P}(i)$ be the statement of Lemma~\ref{lmm:3} 
for a fixed $i\in\nats$.
\begin{quote}
$\mathbf{P}(i)$ : For all nonnegative integers $m,m_1,m_2,N$
that satisfy $0\leq m,m_1,m_2 \leq \binom{N}{i}$ 
and $m_1+m_2 = m +\binom{N}{i}$ we have
$m_1^{(i)} + m_2^{(i)} \leq m^{(i)} + \binom{N}{i+1}$.
\end{quote}
We let $\mathbf{Q}(i)$ be the following seemingly
weaker statement for a fixed $i\in\nats$.
\begin{quote}
$\mathbf{Q}(i)$ : For all nonnegative integers $m,m_1,m_2,N$
that satisfy $0\leq m\leq m_1\leq m_2 < \binom{N}{i}$ 
and $m_1+m_2 = m +\binom{N}{i}$ we have
$m_1^{(i)} + m_2^{(i)} \leq x^{(i)} + \binom{y_i+1}{i+1}$,
where 
\begin{equation}
\label{eqn:UBR}
m_1 = \binom{x_i}{i} + \cdots + \binom{x_p}{p}, \ \ 
m_2 = \binom{y_i}{i} + \cdots + \binom{y_q}{q}
\end{equation}
are their UBR, and $m_1+m_2 = x + \binom{y_i+1}{i}$ where $x\geq 0$.
\end{quote}
We now briefly argue the equivalence of $\mathbf{P}(i)$ and $\mathbf{Q}(i)$.

$\mathbf{P}(i)\Rightarrow \mathbf{Q}(i)$:
Let $i\in\nats$ be given. Suppose $0\leq m\leq m_1\leq m_2 < \binom{N}{i}$ 
and $m_1+m_2 = m +\binom{N}{i}$, and the UBR of $m_1$ and $m_2$ are given
as in (\ref{eqn:UBR}), then by assumption we have 
$m_2 < \binom{y_i+1}{i}\leq \binom{N}{i}$.
Hence $0\leq m_1,m_2\leq \binom{y_i+1}{i}$ and $x\geq 0$. By
$\mathbf{P}(i)$ we then obtain 
$m_1^{(i)} + m_2^{(i)} \leq x^{(i)} + \binom{y_i+1}{i+1}$, so we
have $\mathbf{Q}(i)$.

$\mathbf{Q}(i)\Rightarrow \mathbf{P}(i)$:
Let $i\in\nats$ be given. Suppose $0\leq m,m_1,m_2 \leq \binom{N}{i}$ 
and $m_1+m_2 = m +\binom{N}{i}$. If $m_2 = \binom{N}{i}$, then 
$\mathbf{P}(i)$ is trivially true. Also, by symmetry we may
assume that $m_1\leq m_2$, and so we may assume 
$0\leq m\leq m_1\leq m_2 < \binom{N}{i}$ and therefore we can apply
$\mathbf{Q}(i)$. Repeated use of $\mathbf{Q}(i)$, say $j\geq 1$ times,
will eventually yield
\[
m_1^{(i)} + m_2^{(i)} \leq x^{(i)} + \binom{y_i+j}{i+1}
\]
where $y_i+j = N$ and $x = m \geq 0$, which is $\mathbf{P}(i)$.

Therefore for each $i\in\nats$ the statements $\mathbf{P}(i)$
and $\mathbf{Q}(i)$ are equivalent.

\begin{proof} 
[Lemma~\ref{lmm:3}]
We will use induction and prove $\mathbf{P}(i)$ for all $i\in\nats$.
For $i=1$ we have $x^{(1)} = \binom{x}{2}$ for any integer $x$, 
so proving $\mathbf{P}(1)$ amounts to showing that 
$m_1^2 + m_2^2 \leq m^2 + N^2$ when $m,m_1,m_2\leq N$ and $m_1+m_2=m+N$ 
which is easily established.

We proceed by induction on $i$, assuming the equivalent
statements $\mathbf{P}(i-1)$ and $\mathbf{Q}(i-1)$, and 
prove $\mathbf{Q}(i)$.
Assume $0\leq m\leq m_1\leq m_2 < \binom{N}{i}$. Let
the UBR of $m_1$ and $m_2$ be as stated in (\ref{eqn:UBR}).
By assumption we have $x_i\leq y_i$. 

If $x' = m_1 - \binom{x_i}{i}$ and $y' = m_2 - \binom{y_i}{i}$,
assume for a moment that $x' \geq y'$. Then by the UBR of $m_1$
and $m_2$ we have $y_i\geq x_i > x_{i-1}\geq y_{i-1}$. If now 
$m_1' = \binom{x_i}{i} + y'$ and
$m_2' = \binom{y_i}{i} + x'$, then $m_1 + m_2 = m_1' + m_2'$
and $m_1'\leq m_1\leq m_2\leq m_2'$. Since 
$x_i > y_{i-1}$ and $y_i > x_{i-1}$ we have 
\begin{eqnarray*}
m_1^{(i)} + m_2^{(i)} 
& = & \binom{x_i}{i+1} + {x'}^{(i-1)} + \binom{y_i}{i+1} + {y'}^{(i-1)} \\
& = & \left( \binom{x_i}{i+1} + {y'}^{(i-1)}\right) 
  + \left( \binom{y_i}{i+1} + {x'}^{(i-1)}\right) \\
& = & {m_1'}^{(i)} + {m_2'}^{(i)}.
\end{eqnarray*}
Therefore we may further assume that $x'\leq y'$. We now consider
two cases.

{\sc First case} $x'+y'\geq \binom{y_i}{i-1}$:
In this case we have $x'+y' = x'' + \binom{y_i}{i-1}$ for some
$x''\geq 0$. Since $x'\leq y'$ we have $x_{i-1} < x_i\leq y_i$
and $y_{i-1}<y_i$ and hence from the UBR of $x'$ and $y'$ we
have $x'' < x'\leq y' < \binom{y_i}{i-1}$. By induction hypothesis
$\mathbf{Q}(i-1)$ we then have
${x'}^{(i-1)} + {y'}^{(i-1)} \leq {x''}^{(i-1)} + \binom{y_i}{i}$
and hence
\begin{eqnarray*}
m_1^{(i)} + m_2^{(i)} & = & 
  \left(\binom{x_i}{i} + x'\right)^{(i)} 
 +\left(\binom{y_i}{i} + y'\right)^{(i)} \\
                      & = & 
\binom{x_i}{i+1} + {x'}^{(i-1)} + \binom{y_i}{i+1} + {y'}^{(i-1)} \\
                      &\leq &
\binom{x_i}{i+1} + {x''}^{(i-1)} + \binom{y_i+1}{i+1}.
\end{eqnarray*}
If $x = \binom{x_i}{i} + x''$, then since $x'' < x'$, we
have $x^{(i)} =  \binom{x_i}{i+1} + {x''}^{(i-1)}$ and
$x + \binom{y_i+1}{i} = m_1+m_2$. From above we then have
$m_1^{(i)} + m_2^{(i)} \leq x^{(i)} + \binom{y_i+1}{i+1}$,
thereby obtaining $\mathbf{Q}(i)$ in this case.

{\sc Second case} $x'+y' < \binom{y_i}{i-1}$:
Note that for every $k\in\{1,\ldots,x_i\}$ we have 
\[
\binom{x_i}{i} = \binom{x_i-k}{i} + \sum_{\ell=1}^k\binom{x_i-\ell}{i-1}.
\]
By assumption of $\mathbf{Q}(i)$ we have $m_2 < \binom{N}{i}$
and $m_1+m_2 \geq \binom{N}{i}$ and hence by the UBR of $m_2$ we
have $m_1+m_2 \geq \binom{y_i+1}{i}$,
or $\binom{x_i}{i}+x'+y'\geq \binom{y_i}{i-1}$.
Therefore there is a unique $k\in\{1,\ldots,x_i\}$ such that
\[
\sum_{\ell=1}^k\binom{x_i-\ell}{i-1} + x'+y' \geq \binom{y_i}{i-1} >
\sum_{\ell=1}^{k-1}\binom{x_i-\ell}{i-1} + x'+y'.
\]
Hence 
$\sum_{\ell=1}^k\binom{x_i-\ell}{i-1} + x'+y' = \delta + \binom{y_i}{i-1}$
where $0\leq \delta < \binom{x_i-k}{i-1}$. Since $y_i \geq x_i > x_i-k$
we have further $\binom{y_i}{i-1} > \binom{x_i-k}{i-1}$ and hence
\[
0\leq \delta, \ \ \binom{x_i-k}{i-1}, \ \ 
\sum_{\ell=1}^{k-1}\binom{x_i-\ell}{i-1} + x'+y' < \binom{y_i}{i-1}.
\]
By Claim~\ref{clm:sum-less} and then by induction hypothesis $\mathbf{P}(i-1)$ 
we have
\begin{eqnarray*}
\sum_{\ell=1}^k\binom{x_i-\ell}{i} + {x'}^{(i-1)}+{y'}^{(i-1)}
 & = &  
\sum_{\ell=1}^k\binom{x_i-\ell}{i-1}^{(i-1)} + {x'}^{(i-1)}+{y'}^{(i-1)} \\
 & \leq &
\left(\sum_{\ell=1}^{k-1}\binom{x_i-\ell}{i-1} + x'+y'\right)^{(i-1)}
+ \binom{x_i-k}{i-1}^{(i-1)} \\
 & \leq & {\delta}^{(i-1)} + \binom{y_i}{i-1}^{(i-1)} \\
 & =    & {\delta}^{(i-1)} + \binom{y_i}{i}. \\
\end{eqnarray*}
Note that by definition of $\delta$ and its range, we have 
$m_1+m_2 = x + \binom{y_i+1}{i}$ where
$x = \binom{x_i-k}{i} + \delta$
and also
\[
x^{(i)} + \binom{y_i+1}{i}^{(i)}
= \binom{x_i-k}{i+1} + {\delta}^{(i-1)} + \binom{y_i+1}{i+1}.
\]
Since 
\[
\binom{x_i}{i+1} = \binom{x_i-k}{i+1} + \sum_{\ell=1}^k\binom{x_i-\ell}{i}
\]
we then finally get
\begin{eqnarray*}
m_1^{(i)} + m_2^{(i)} 
  & = & 
\binom{x_i}{i+1} + {x'}^{(i-1)} + \binom{y_i}{i+1} + {y'}^{(i-1)} \\
  & = & 
\binom{x_i-k}{i+1} + \sum_{\ell=1}^k\binom{x_i-\ell}{i} + 
{x'}^{(i-1)} + \binom{y_i}{i+1} + {y'}^{(i-1)} \\ 
  & \leq &
\binom{x_i-k}{i+1} + {\delta}^{(i-1)} + \binom{y_i}{i}
+ \binom{y_i}{i+1} \\
  & = & 
\binom{x_i-k}{i+1} + {\delta}^{(i-1)} + \binom{y_i+1}{i+1} \\
  & = & 
x^{(i)} + \binom{y_i+1}{i}^{(i)}
\end{eqnarray*}
which is $\mathbf{Q}(i)$. This completes the inductive proof that  
$\mathbf{P}(i-1)$ and $\mathbf{Q}(i-1)$ imply
$\mathbf{Q}(i)$, and so this completes the proof of of Lemma~\ref{lmm:3}.
\end{proof}

\section{The main theorem}
\label{sec:main}

In this section we use results from previous section
to prove our main result of this article Theorem~\ref{thm:main-full}
here below. 

Let $k\in\nats$ and $S\subseteq V(Q_k)$. 
Call a vertex/binary string of a subgraph $G = Q_k[S]$ of the 
$k$-dimensional hypercube $Q_k$ {\em full} if its degree is $k$ in $G$. 
For $n\in [2^k]$ Let $\phi_k(n)$ denote the maximum number of full 
vertices of an induced subgraph of $Q_k$ on $n$ vertices:
\[
\phi_k(n) = \max_{S\subseteq V(Q_k), |S|=n} 
|\{\tilde{x}\in S : d_{Q_k[S]}(\tilde{x}) = k\}.
\]
Clearly $\phi_k(2^k) = 2^k$ as every vertex of $Q_k$ is full.
If $n<2^k$ and $S\subseteq V(Q_k)$ contains $n$ vertices and
induces $\phi_k(n)$ full vertices in $Q_k$, then we can by symmetry of $Q_k$
(or relabeling of the vertices) assume that the vertex corresponding to the 
binary string consisting of $k$ 1's is not in $S$. In this case a 
vertex/string in $S$ with the maximum number of 1's is not full in $Q_k[S]$.
In particular we have $\phi_k(n) < n$ for each $n<2^k$.
\begin{observation}
\label{obs:2things}
For $k\in\nats$ we have:
\begin{enumerate}
  \item If $n < 2^k$ then $\phi_k(n) < n$.
  \item The function $\phi_k : [2^k] \rightarrow [2^k]$ is increasing.
\end{enumerate}
\end{observation}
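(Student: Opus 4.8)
The plan is to dispatch both parts with short combinatorial arguments; part~1 essentially formalizes the remark made just before the Observation, and part~2 is a monotonicity argument via "adding a vertex cannot destroy fullness."

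For part~1 I would first invoke vertex-transitivity of $Q_k$. For any fixed $\tilde{c}\in\{0,1\}^k$ the map $\tilde{x}\mapsto\tilde{x}\oplus\tilde{c}$ is an automorphism of $Q_k$ (two vertices differ in exactly one coordinate if and only if their translates do), and it carries an induced subgraph $Q_k[S]$ to $Q_k[S\oplus\tilde{c}]$ while preserving the degree of each vertex, hence the number of full vertices. Since $n<2^k$ there is some $\tilde{w}\in V(Q_k)\setminus S$; taking $\tilde{c}=\tilde{w}\oplus\mathbf{1}$ (with $\mathbf{1}$ the all-ones string) we may assume without loss of generality that $\mathbf{1}\notin S$. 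Now choose $\tilde{x}\in S$ with the maximum number of $1$'s among all vertices of $S$; this is possible since $S\neq\emptyset$. As $\tilde{x}\neq\mathbf{1}$, it has a coordinate equal to $0$, and flipping that coordinate produces a neighbor of $\tilde{x}$ in $Q_k$ with strictly more $1$'s, which therefore cannot lie in $S$. Hence $d_{Q_k[S]}(\tilde{x})<k$, so $\tilde{x}$ is not full, and consequently $\phi_k(n)\le n-1<n$.

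For part~2 I would argue that passing to a larger induced subgraph preserves fullness. Fix $n<2^k$ and let $S$ with $|S|=n$ achieve $\phi_k(n)$ full vertices; pick $\tilde{v}\in V(Q_k)\setminus S$. In $Q_k[S\cup\{\tilde{v}\}]$ every vertex of $S$ has degree at least as large as in $Q_k[S]$, since all edges of $Q_k[S]$ persist and at most one new incident edge (to $\tilde{v}$) can appear. Because the degree of any vertex in any subgraph of $Q_k$ is at most $k$, a vertex that was full in $Q_k[S]$ still has degree exactly $k$ in $Q_k[S\cup\{\tilde{v}\}]$, hence is full there. Therefore $\phi_k(n+1)\ge\phi_k(n)$ for every $n$ with $1\le n<2^k$, which together with $\phi_k$ mapping $[2^k]$ into $[2^k]$ gives that $\phi_k$ is increasing (nondecreasing) on $[2^k]$.

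There is no real obstacle in either part. The only points deserving a word of care are the use of vertex-transitivity to normalize the missing vertex to $\mathbf{1}$ in part~1, and, in part~2, the trivial but essential observation that a subgraph-degree in $Q_k$ never exceeds $k$, so that an already-full vertex stays full rather than gaining a $(k+1)$st neighbor.
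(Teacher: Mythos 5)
Your proof is correct and, for part~1, follows essentially the same line as the paper's informal argument immediately preceding the Observation (symmetry to normalize the missing vertex to the all-ones string, then noting a maximal-weight vertex of $S$ cannot be full); your use of the XOR automorphism simply makes the phrase ``by symmetry of $Q_k$'' precise. The paper offers no explicit argument for part~2, so your short monotonicity argument (adding a vertex cannot lower any degree, and degrees in $Q_k$ are capped at $k$, so full vertices remain full) is a welcome and correct filling-in of what the paper treats as self-evident.
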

{\sc Remark:} By Observation~\ref{obs:2things} we see that $\phi_k$ cannot be 
strictly increasing.

Note that every $n\in [2^k]$ has a unique {\em hypercube representation (HCR)}
as $n = \sum_{\ell = 0}^i\binom{k}{\ell} + m$ where $0\leq m < \binom{k}{i+1}$.
The main result of this section is the following.
\begin{theorem}
\label{thm:main-full}
For $k\in\nats$ and $n\in [2^k]$ with HCR 
$n =  \sum_{\ell = 0}^i\binom{k}{\ell} + m$, then 
\[
\phi_k(n) = \sum_{\ell = 0}^{i-1}\binom{k}{\ell} + m^{(k-i-1)}.
\]
\end{theorem}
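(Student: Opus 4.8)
The plan is to establish both inequalities $\phi_k(n) \le \sum_{\ell=0}^{i-1}\binom{k}{\ell} + m^{(k-i-1)}$ and $\phi_k(n) \ge \sum_{\ell=0}^{i-1}\binom{k}{\ell} + m^{(k-i-1)}$, and to do the whole thing by induction on $k$, using the decomposition $Q_k = Q_{k-1}^0 \boxplus Q_{k-1}^1$. The base case $k=1$ (or $k=2$) is immediate from Observation~\ref{obs:2things} and the formula specializing correctly. For the inductive step, fix $n \in [2^k]$ with HCR $n = \sum_{\ell=0}^i \binom{k}{\ell} + m$. Given an optimal $S \subseteq V(Q_k)$ with $|S| = n$, write $S = S_0 \cup S_1$ with $S_b = S \cap B_b$, $n_b = |S_b|$, so $n_0 + n_1 = n$. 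The crucial structural observation is: a vertex $(\tilde{x},b) \in S_b$ is full in $Q_k[S]$ if and only if its copy $(\tilde{x},1-b)$ lies in $S_{1-b}$ \emph{and} $(\tilde{x},b)$ is full in $Q_{k-1}[S_b]$. Hence the number of full vertices of $Q_k[S]$ equals the number of vertices $\tilde{x} \in V(Q_{k-1})$ such that $\tilde{x}$ is full in both $Q_{k-1}[S_0']$ and $Q_{k-1}[S_1']$, where $S_b'$ is the projection of $S_b$; translating back, this is at most $\phi_{k-1}(n_0) + \phi_{k-1}(n_1) - n + \text{(something)}$ — more precisely I would phrase it as: full vertices of $Q_k[S] \le \min$ over the two copies of the full-count, combined with the overlap constraint. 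This leads to the recurrence
\[
\phi_k(n) \le \max_{\substack{n_0+n_1=n\\ 0\le n_0,n_1\le 2^{k-1}}}
\Bigl( \text{full vertices jointly realizable in } Q_{k-1}[S_0'],Q_{k-1}[S_1'] \Bigr).
\]

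To make this precise and get the matching recurrence~(\ref{eqn:feqn}) promised in the introduction, I expect the right statement to be that $\phi_k(n)$ is governed by a divide-and-conquer maximin recurrence in which, for a split $n = n_0 + n_1$, the contribution is controlled by how many full vertices of a $\phi_{k-1}(n_0)$-optimal configuration can be made to coincide with full vertices of a $\phi_{k-1}(n_1)$-optimal configuration. Here is where the Kruskal-Katona machinery enters: the extremal configurations should be taken to be the "compressed" sets $\bigcup_\ell \mathcal{F}_\ell(\cdot)$ realizing $\Delta_{\tilde f}$, i.e. the first so-many binary strings in a suitable (reverse-lexicographic / colex) order grouped by weight, and the upper-boundary function $m \mapsto m^{(i)}$ exactly counts how the "top layer" of such a configuration propagates. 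The lower bound (construction) is then: take $S$ to consist of all strings of weight $\le i$ together with the first $m$ strings of weight $i+1$ in colex order; a weight-$\le i-1$ string is automatically full, and a weight-$i$ string is full iff all $k-i$ of its up-neighbors (weight $i+1$) are present — by definition of the UBR and the properties (i)–(v) of $\mathcal{F}$, the number of such full weight-$i$ strings is exactly $m^{(k-i-1)}$. (One must double-check the shift in the boundary index: full means all $k-i$ up-coordinates present, which is why it is $m^{(k-i-1)}$ rather than $m^{(i)}$; I would verify this carefully on a small example.)

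For the upper bound, I would argue that no configuration does better than the compressed one. This is the heart of the matter and is where Lemmas~\ref{lmm:1},~\ref{lmm:2}, and~\ref{lmm:3} are designed to be used: Lemma~\ref{lmm:3}, $m_1^{(i)} + m_2^{(i)} \le m^{(i)} + \binom{N}{i+1}$ when $m_1 + m_2 = m + \binom{N}{i}$, is exactly the inequality one needs to show that merging two sub-configurations across the decomposition $Q_k = Q_{k-1}^0 \boxplus Q_{k-1}^1$ never beats the split where one copy is "full" (contributing the whole $\binom{N}{i+1}$) and the other carries the remainder $m$; Claim~\ref{clm:sum-less} and Lemma~\ref{lmm:1} handle the sub-additivity needed when neither copy is saturated. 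Concretely: bound the full-count of $Q_k[S]$ by $\phi_{k-1}$ of the two halves plus a coincidence term, apply the induction hypothesis to rewrite everything in HCR form, and then use Lemma~\ref{lmm:3} (with $N = k-1$ and the appropriate layer index) repeatedly to collapse the maximin to the claimed closed form. The main obstacle I anticipate is bookkeeping the index shifts between "dimension of a face in a simplicial complex", "Hamming weight of a binary string", and "the boundary parameter $k-i-1$", and in particular verifying that the maximin over $n_0 + n_1 = n$ is indeed attained at the extreme split (one full copy) rather than some balanced split — which is precisely what Lemma~\ref{lmm:3} is engineered to guarantee, so the proof should go through once the recurrence is set up correctly.
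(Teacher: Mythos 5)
Your high-level plan matches the paper's: induction on $k$ via the decomposition $Q_k = Q_{k-1}^0 \boxplus Q_{k-1}^1$, an explicit compressed construction for the lower bound, and a divide-and-conquer maximin bound for the upper bound, with Lemmas~\ref{lmm:1}--\ref{lmm:3} supplying the needed inequalities. The lower-bound construction you describe (all strings of weight $\leq i$ plus the first $m$ of weight $i+1$ in colex order on complements) is the paper's construction, and your guess that the count of full weight-$i$ strings is $m^{(k-i-1)}$ is correct once you observe that a weight-$i$ string is represented by its complementary $(k-i)$-set and ``full'' means all $(k-i-1)$-subsets are present; you flag this index shift but do not verify it.

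However, there is a genuine error in your structural observation, and the heart of the upper bound is missing. You write that the number of full vertices of $Q_k[S]$ ``equals the number of $\tilde{x}$ full in both $Q_{k-1}[S_0']$ and $Q_{k-1}[S_1']$.'' That is false. The correct statement (which you also write one sentence earlier) is that $(\tilde{x},b)$ is full in $Q_k[S]$ iff $(\tilde{x},b)$ is full in $Q_{k-1}^b[S_b]$ \emph{and} its copy lies in $S_{1-b}$ --- the copy need not itself be full. So the full count of $Q_k[S]$ is a sum over $b\in\{0,1\}$ of $|\{\tilde{x}\in S_b' : \tilde{x} \text{ full in } Q_{k-1}[S_b'] \text{ and } \tilde{x}\in S_{1-b}'\}|$, which leads to the recurrence
\[
\phi_k(n)\ \leq\ \max_{n_0+n_1=n}\bigl(\min(\phi_{k-1}(n_0),n_1) + \min(\phi_{k-1}(n_1),n_0)\bigr),
\]
not to a count of vertices full in both halves. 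From your formulation, a vertex $\tilde{x}$ full in $Q_{k-1}[S_0']$ with $\tilde{x}\in S_1'$ but not full in $Q_{k-1}[S_1']$ would be missed, and a vertex full in both would be counted once rather than twice. Your tentative bound ``$\phi_{k-1}(n_0) + \phi_{k-1}(n_1) - n + (\text{something})$'' does not recover the correct recurrence either.

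Beyond that, the upper bound is where all the work is, and you leave it at ``apply the induction hypothesis to rewrite everything in HCR form, and then use Lemma~\ref{lmm:3} repeatedly to collapse the maximin.'' In the paper this is a six-case analysis (A1, A2, B11, B12, B21, B22), driven by whether $f_{k-1}(n_0)\geq n_1$ or not, and in the latter regime by where $n_0$ falls relative to a distinguished ``balanced'' split. Lemma~\ref{lmm:1} is the workhorse in four of the six cases, Lemma~\ref{lmm:2} handles case B22, and Lemma~\ref{lmm:3} is needed only for the truly balanced case B12. Your characterization of Lemma~\ref{lmm:3} as ruling out balanced splits is accurate in spirit but is one sixth of the argument; the rest of the bookkeeping between the HCR of $n$, the HCRs of $n_0,n_1$ over $Q_{k-1}$, and the correct boundary indices is where the proof actually lives, and it is not done here.
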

We will prove Theorem~\ref{thm:main-full} by induction on $k$.
In order to do that, we will first derive a recursive upper bound
for $\phi_k(n)$.

Let $S \subseteq V(Q_k)$ be a set of $n$ vertices/binary strings,
and let $F_k(S)\subset S$ be the vertices of $S$ that are full
in $Q_k[S]$.
Looking at the decomposition $Q_k = Q_{k-1}^0 \boxplus Q_{k-1}^1$
let $S_b = S\cap V(Q_{k-1}^b)$ for $b=0,1$ and $n_b = |S_b|$.
Clearly $S = S_0 \cup S_1$ is a partition and we have $n_0 + n_1 = n$.
Note that for $b\in\{0,1\}$, a vertex in $S_b$ is full in $Q_k[S]$ iff 
(i) it is full in $Q_{k-1}^b[S_b]$, and 
(ii) its copy is contained in $S_{1-b}$.
By (i) and (ii) the number of vertices in $S_b$ 
that are full in $Q_k[S]$ is at most $\min(\phi_{k-1}(n_b),n_{1-b})$,
that is $|F_k(S)\cap S_b)|\leq \min(\phi_{k-1}(n_b),n_{1-b})$. Since
$F_k(S) = (F_k(S)\cap S_0)\cup (F_k(S)\cap S_1)$ 
is a partition we then have 
\[
|F_k(S)| = |F_k(S)\cap S_0| + |F_k(S)\cap S_1| \leq 
\min(\phi_{k-1}(n_0),n_1) + \min(\phi_{k-1}(n_1),n_0).
\]
By definition we then have the following recursive max-min 
upper bound 
\begin{equation}
\label{eqn:m+m}
\phi_k(n) \leq 
\max_{n_0+n_1=n}(\min(\phi_{k-1}(n_0),n_1) + \min(\phi_{k-1}(n_1),n_0)).
\end{equation}
Note that is impossible to have $n_b < \phi_{k-1}(n_{1-b})$ for both $b = 0,1$,
since then $n_0 < \phi_{k-1}(n_1) < n_1 < \phi_{k-1}(n_0) < n_0$,
a blatant contradiction. From this we see that (\ref{eqn:m+m})
can be written as 
\begin{equation}
\label{eqn:3m} 
\phi_k(n) \leq 
\max_{n_0+n_1=n}(\min(\phi_{k-1}(n_0) + \phi_{k-1}(n_1),
                n_0 + \phi_{k-1}(n_0),
                n_1 + \phi_{k-1}(n_1))).
\end{equation}
Further, by symmetry the maximum in (\ref{eqn:3m}) is attained
when $n_0\geq n_1$, in which case we have 
$n_1 + \phi_{k-1}(n_1) \leq n_0 + \phi_{k-1}(n_0)$. Hence we obtain
\begin{equation}
\label{eqn:phibound}
\phi_k(n) \leq 
\max_{n_0+n_1=n, n_0\geq n_1}(\min(\phi_{k-1}(n_0) + \phi_{k-1}(n_1),
               n_1 + \phi_{k-1}(n_1))).
\end{equation}
Let $f_k(n)$ be the function on the right in the displayed formula in 
the above Theorem~\ref{thm:main-full}
\[
f_k(n) := \sum_{\ell = 0}^{i-1}\binom{k}{\ell} + m^{(k-i-1)}
\]
where $n =  \sum_{\ell = 0}^i\binom{k}{\ell} + m$ is its HCR.
We first show that $\phi_k(n) \geq f_k(n)$ by 
explicitly show that an induced subgraph on 
$n$ vertices of $Q_k$ can have $f_k(n)$ full vertices.
Then we will show that $f_k(n)$ satisfies 
\begin{equation}
\label{eqn:feqn}
f_k(n) = \max_{n_0+n_1=n, n_0\geq n_1}(\min(f_{k-1}(n_0) + f_{k-1}(n_1),
n_1 + f_{k-1}(n_1))),
\end{equation}
which by (\ref{eqn:phibound}) shows that $f_k(n) \geq \phi_k(n)$.

For $k\in\nats$ let $n\in [2^k]$ with HCR
$n =  \sum_{\ell = 0}^i\binom{k}{\ell} + m$. To show that 
$\phi_k(n) \geq f_k(n)$ we construct an induced subgraph
of $Q_k$ on vertices with $f_k(n)$ full vertices as follows.
Let $S\subseteq V(Q_k)$ be the set of $n$ vertices containing
all $\sum_{\ell = 0}^i\binom{k}{\ell}$ binary strings having at most 
$i$ 1's in their representation, and the first $m$ binary strings 
with exactly $i+1$ 1's in their
representation in the lexicographical order. 
{\em Note!} Here a binary string represents the {\em opposite} 
subset of $[k]$; where the $j$-th bit
is 0 indicates that $j$ is included in the subset.
In this way the binary strings are ordered as their corresponding subsets 
of $[k]$ in the reverse lexicographical order. 
Clearly every vertex in 
the induced graph $Q_k[S]\subseteq Q_k$ with at most $i-1$ 1's
in their representation is full, these amount to 
$\sum_{\ell = 0}^{i-1}\binom{k}{\ell}$ full vertices. 
Also note that none of the $m$ vertices with exactly $i+1$ 1's in their
representation is full, as they are not connected to any vertex
with $i+2$ 1's in $Q_k[S]$. Among the $\binom{k}{i}$ binary strings
in $S$ containing exactly $i$ 1's, we briefly argue that $m^{(k-i-1)}$ 
of them are full in the following way. 

Consider the $(k-i-1)$-dimensional simplicial
complex $\Delta_{\tilde{f}}$ where 
\[
\tilde{f} = 
\left(\binom{k}{0},\ldots,\binom{k}{k-i-2},m,m^{(k-i-1)}\right)
\in {\ints}^{k-i+1}.
\]
Note that 
$\Delta_{\tilde{f}} \cap \left(\binom{k}{k-i}\cup \binom{k}{k-i-1}\right)$
is represented by the bipartite subgraph $G$ of $Q_k[S]$ induced by
the binary strings containing exactly $i$ or $i+1$ 1's, where
two stings are adjacent in $G$ iff for their opposite sets
the smaller one, with $k-i-1$ elements, is contained in the other
one with $k-i$ elements. 

Since each of the  $m^{(k-i-1)}$ subsets from 
$\binom{k}{k-i}\cap \Delta_{\tilde{f}}$
has all of its $k-i-1$ subsets among the $m$ subsets from 
$\Delta_{\tilde{f}}\cap \binom{k}{k-i-1}$, then the representing 
$m^{(k-i-1)}$ opposite binary strings in $G$, containing exactly $i$
1's, are each connected
to all the $k-i$ opposite binary strings among the $m$ ones in $G$,
that contain exactly $i+1$ 1's. 
Since each binary string in $G\subseteq Q_k[S]$ with $i$ 1's is
clearly connected to all $i$ binary strings with $i-1$ 1's in $Q_k[S]$,
we see that each of the mentioned $m^{(k-i-1)}$ opposite binary strings in 
$G\subseteq Q_k[S]$ are full. This shows that $Q_k[S]$ is an induced
subgraph of $Q_k$ with $n$ vertices and at least $f_k(n)$ full vertices.
Therefore we have $\phi_k(n) \geq f_k(n)$.

To complete the proof of Theorem~\ref{thm:main-full} we show
that $f_k(n)$ satisfies (\ref{eqn:feqn}), which by (\ref{eqn:phibound})
then implies that $\phi_k(n) \leq f_k(n)$, and hence $\phi_k(n) = f_k(n)$.
This will occupy the remainder of this section.
To show (\ref{eqn:feqn}), we will show
that 
$f_k(n) \geq \min(f_{k-1}(n_0) + f_{k-1}(n_1), n_1 + f_{k-1}(n_1))$,
whenever $n_0+n_1 = n$ and $n_0\geq n_1$. There are all together
six cases we will consider to verify this inequality; 
the first case (A) has two sub-cases (A1) and (A2), the second case (B)
has four sub-cases (B11), (B12), (B21) and (B22).

{\sc Case (A)} $f_{k-1}(n_0) \geq n_1$: Here we want to show
that $f_k(n)\geq n_1 + f_{k-1}(n_1)$. By definition of $f_k(n)$
we have here that $n_0 > f_{k-1}(n_0) \geq n_1$. Since $f_{k-1}$ is 
increasing there is a critical pair $(n_0^*,n_1^*)$ summing up
to $n$ such that (i) $f_{k-1}(n_0^*) \geq n_1^*$, and 
(ii) $f_{k-1}(n_0^*-1) < n_1^*+1$. Clearly we have 
$n_0\geq n_0^*$ and $n_1\leq n_1^*$, and so
$n_1 + f_{k-1}(n_1) \leq n_1^* + f_{k-1}(n_1^*)$. It therefore 
suffices to show that $f_k(n)\geq n_1^* + f_{k-1}(n_1^*)$.
Let $n =  \sum_{\ell = 0}^i\binom{k}{\ell} + m$ be its HCR.
Since $0\leq m < \binom{k}{i+1} = \binom{k-1}{i+1}+\binom{k-1}{i}$, 
we consider two sub-cases. 

{\sc Sub-case (A1)} $0\leq m < \binom{k-1}{i}$: Here in this
case we have a bipartition $n = n_0' + n_1'$ where 
\begin{equation}
\label{eqn:part-A1}
n_0' = \sum_{\ell=0}^i\binom{k-1}{\ell} , \ \ 
n_1' = \sum_{\ell=0}^{i-1}\binom{k-1}{\ell} + m 
\end{equation}
for which 
\begin{equation}
\label{eqn:cond-A1}
f_{k-1}(n_0') = \sum_{\ell=0}^{i-1}\binom{k-1}{\ell} \leq n_1'
\end{equation}
and hence, by definition of $n_0^*$ and $n_1^*$, we have $n_0^* \geq n_0'$,
$n_1^* \leq n_1'$ and so
\[
n_0^* = \sum_{\ell=0}^i\binom{k-1}{\ell} + m_0^*, \ \
n_1^* = \sum_{\ell=0}^{i-1}\binom{k-1}{\ell} + m_1^*,
\]
where $m_0^*, m_1^* \geq 0$ are integers, $m_0^* + m_1^* = m$,
and (i) ${m_0^*}^{(k-i-2)} \geq m_1^*$ and (ii) $(m_0^*-1)^{(k-i-2)} < m_1^*+1$.
Now note that $f_k(n)\geq n_1^* + f_{k-1}(n_1^*)$ is, by definition of $f_{k-1}$,
equivalent to $m^{(k-i-1)} \geq m_1^* + {m_1^*}^{(k-i-1)}$, which is implied
by $m^{(k-i-1)} \geq \min(m_1^*,{m_0^*}^{(k-i-2)}) + {m_1^*}^{(k-i-1)}$,
which holds by Lemma~\ref{lmm:1} since $m_0^* + m_1^* = m$.

{\sc Sub-case (A2)} $\binom{k-1}{i}\leq m < \binom{k}{i+1}$: Similarly
to Sub-case (A1) we have here in this case a bipartition $n = n_0' + n_1'$
where
\begin{equation}
\label{eqn:part-A2}
n_0' = \sum_{\ell=0}^i\binom{k-1}{\ell} + m', \ \ 
n_1' = \sum_{\ell=0}^{i}\binom{k-1}{\ell}
\end{equation}
where $m' = m - \binom{k-1}{i}$ for which 
\begin{equation}
\label{eqn:cond-A2}
f_{k-1}(n_0') = 
\sum_{\ell=0}^{i-1}\binom{k-1}{\ell} + {m'}^{(k-i-2)}
\leq n_1'
\end{equation}
and hence again, by definition of $n_0^*$ and $n_1^*$, we have 
$n_0^* \geq n_0'$, $n_1^* \leq n_1'$ and so
\[
n_0^* = \sum_{\ell=0}^i\binom{k-1}{\ell} + m_0^*, \ \
n_1^* = \sum_{\ell=0}^{i-1}\binom{k-1}{\ell} + m_1^*,
\]
where $m_0^* \geq m'$ and $m_1 < \binom{k-1}{i}$ are integers,
$m_0^* + m_1^* = m$, and (i) ${m_0^*}^{(k-i-2)} \geq m_1^*$ and 
(ii) $(m_0^*-1)^{(k-i-2)} < m_1^* + 1$.
Exactly as in the previous case (A1), we note that 
$f_k(n)\geq n_1^* + f_{k-1}(n_1^*)$ is by definition of $f_{k-1}$,
equivalent to $m^{(k-i-1)} \geq m_1^* + {m_1^*}^{(k-i-1)}$, which is implied
by $m^{(k-i-1)} \geq \min(m_1^*,{m_0^*}^{(k-i-2)}) + {m_1^*}^{(k-i-1)}$,
which again holds by Lemma~\ref{lmm:1} since $m_0^* + m_1^* = m$.

\comment{ 
m^{(k-i-1)} \geq \binom{k-1}{i} + \binom{k-1}{i-1} + m_1^* + {m_1^*}^{(k-i-2)}.
\end{equation}
Since $0\leq m' < \binom{k-1}{i+1} = \binom{k-1}{k-i-2}$ and 
$m = m' + \binom{k-1}{i} = m' + \binom{k-1}{k-i-1}$, we have
$m^{(k-i-1)} = \left(m' + \binom{k-1}{k-i-1}\right)^{(k-i-1)}
= \binom{k-1}{k-i} + {m'}^{(k-i-2)}$, and hence (\ref{eqn:A2}) is
equivalent to 
$(m_0^* + m_1^*)^{(k-i-2)} \geq \binom{k-1}{i} + m_1^* + {m_1^*}^{(k-i-2)}$,
which by assumption is equivalent to
\begin{equation}
\label{eqn:A2-2}
(m_0^* + m_1^*)^{(k-i-2)} \geq 
\min\left({m_0^*}^{(k-i-2)}, \binom{k-1}{i} + m_1^*\right) + {m_1^*}^{(k-i-2)}.
\end{equation}
The validity of (\ref{eqn:A2-2}) is now implied by Claim~\ref{clm:sum-less}.
}

{\sc Case (B)} $f_{k-1}(n_0) < n_1$: Here we want to show
that $f_k(n)\geq f_{k-1}(n_0) + f_{k-1}(n_1)$. By definition of $f_k(n)$
we have here that $n_0 \geq n_1 > f_{k-1}(n_0)$. 
Let $n =  \sum_{\ell = 0}^i\binom{k}{\ell} + m$ be its HCR.
Since $0\leq m < \binom{k}{i+1} = \binom{k-1}{i+1}+\binom{k-1}{i}$, 
we consider the two cases of whether $0\leq m< \binom{k-1}{i}$ or
$\binom{k-1}{i}\leq m < \binom{k}{i+1}$.

{\sc Sub-case (B1)} $0\leq m< \binom{k-1}{i}$: As in case (A1), we have a 
partition $n = n_0'+ n_1'$ given by (\ref{eqn:part-A1})
such that we have (\ref{eqn:cond-A1}). The two sub-cases here, (B11) and (B12),
depend on whether $n_0 \geq n_0'$ or $n_0 \leq n_0'$.

{\sc Sub-sub-case (B11)} $n_0\geq n_0'$ in (\ref{eqn:part-A1}):
Considering the critical pair $(n_0^*,n_1^*)$ from Case (A), 
we have here that $n_0' \leq n_0 < n_0^*$ and hence 
\[
n_0 = \sum_{\ell=0}^i\binom{k-1}{\ell} + m_0, \ \ 
n_1 = \sum_{\ell=0}^{i-1}\binom{k-1}{\ell} + m_1 
\]
where $0\leq m_0 < m_0*$, $m_1^* < m_1\leq m$, and $m_0+m_1=m$.
Now note that $f_k(n)\geq f_{k-1}(n_0) + f_{k-1}(n_1)$ is 
by definition of $f_{k-1}$, equivalent to 
\begin{equation}
\label{eqn:B11}
m^{(k-i-1)} \geq m_0^{(k-i-2)} + m_1^{(k-i-1)}.
\end{equation} 
By definition of $m_0^*$ we have 
$m_0^{(k-i-2)}\leq {m_0^*}^{(k-i-2)} \leq m_1^* < m_1$ and hence
(\ref{eqn:B11}) is equivalent to 
$(m_1+m_0)^{(k-i-1)} \geq m_1^{(k-i-1)} + \min(m_0^{(k-i-2)},m_1)$,
which is implied by Lemma~\ref{lmm:1}.

{\sc Sub-sub-case (B12)} $n_0 \leq n_0'$ in (\ref{eqn:part-A1}):
Here we then have $n/2\leq n_0 \leq n_0'$ and $n_1'\leq n_1\leq n/2$, and 
hence 
\[
n_0 = \sum_{\ell=0}^{i-1}\binom{k-1}{\ell} + m_0, \ \ 
n_1 = \sum_{\ell=0}^{i-1}\binom{k-1}{\ell} + m_1 
\]
where $0\leq m_1 \leq m_0$ and $m_0+m_1 = m +\binom{k-1}{i}$,
and hence 
$\left(m+\binom{k-1}{i}\right){\big /}2\leq m_1 \leq m_0 < \binom{k-1}{i}$.
Here $f_k(n)\geq f_{k-1}(n_0) + f_{k-1}(n_1)$ is 
by definition of $f_{k-1}$, equivalent to 
$\binom{k-1}{i-1} + m^{(k-i-1)} \leq m_0^{(k-i-1)} + m_0^{(k-i-1)}$,
which holds by Lemma~\ref{lmm:3}. 

{\sc Sub-case (B2)} $\binom{k-1}{i}\leq m < \binom{k}{i+1}$: 
As in case (A2), we have a 
partition $n = n_0'+ n_1'$ given by (\ref{eqn:part-A2})
such that we have (\ref{eqn:cond-A2}). As in the case (B1),
the two sub-cases here, (B21) and (B22),
depend on whether $n_0 \geq n_0'$ or $n_0 \leq n_0'$.

{\sc Sub-sub-case (B21)} $n_0\geq n_0'$ in (\ref{eqn:part-A2}):
Considering the critical pair $(n_0^*,n_1^*)$ from Case (A), 
we have here that $n_0' \leq n_0 < n_0^*$ and hence 
\[
n_0 = \sum_{\ell=0}^i\binom{k-1}{\ell} + m_0, \ \ 
n_1 = \sum_{\ell=0}^{i-1}\binom{k-1}{\ell} + m_1 
\]
where $m' := m-\binom{k-1}{i}\leq m_0 < m_0*$, 
$m_1^* < m_1\leq \binom{k-1}{i}$, and $m_0+m_1=m$.
Now note that $f_k(n)\geq f_{k-1}(n_0) + f_{k-1}(n_1)$ is 
by definition of $f_{k-1}$, equivalent to 
\begin{equation}
\label{eqn:B21}
m^{(k-i-1)} \geq m_0^{(k-i-2)} + m_1^{(k-i-1)}.
\end{equation}
Since $m_0 \leq m_0^*-1$ we have by definition of $m_0^*$
that 
$m_0^{(k-i-2)}\leq (m_0^*-1)^{(k-i-2)} < m_1^*+1 \leq m_1$
and hence (\ref{eqn:B21}) is equivalent to 
$(m_1+m_0)^{(k-i-1)} \geq m_1^{(k-i-1)} + \min(m_0^{(k-i-2)},m_1)$,
which is implied by Lemma~\ref{lmm:1}.

{\sc Sub-sub-case (B22)} $n_0\leq n_0'$ in (\ref{eqn:part-A2}):
Here we then have $n/2\leq n_0 \leq n_0'$ and $n_1'\leq n_1\leq n/2$, and 
hence 
\[
n_0 = \sum_{\ell=0}^{i}\binom{k-1}{\ell} + m_0, \ \ 
n_1 = \sum_{\ell=0}^{i}\binom{k-1}{\ell} + m_1 
\]
where $m'/2 \leq m_0 \leq m'$ and 
$0\leq m_1\leq m'/2$, and $m_0+m_1 = m' = m - \binom{k-1}{i}$.
Here $f_k(n)\geq f_{k-1}(n_0) + f_{k-1}(n_1)$ is 
by definition of $f_{k-1}$, equivalent to 
$m^{(k-i-1)} \geq m_0^{(k-i-2)} + m_0^{(k-i-2)} + \binom{k-1}{i-1}$.
Since $m = m_0 + m_1 + \binom{k-1}{k-i-1}$ this follows from
Lemma~\ref{lmm:2}.

In all the above six cases (A1), (A2), and (B11), (B12), (B21) and (B22),
we have that 
$f_k(n) \geq \min(f_{k-1}(n_0) + f_{k-1}(n_1), n_1 + f_{k-1}(n_1))$
whenever $n_0+n_1 = n$ and $n_0\geq n_1$. This shows that $f_k(n)$ 
satisfies (\ref{eqn:feqn}) and therefore that $\phi_k(n) \leq f_k(n)$,
which completes the proof of Theorem~\ref{thm:main-full}.

\section{An application}
\label{sec:appl}

In this section we apply the main result of the previous
section, Theorem~\ref{thm:main-full}, to determine the 
value $\min(\max( |V(H_1)|, |V(H_2)|))$  
where (i) $H_1$ and $H_2$ are induced subgraphs of $Q_k$, and
(ii) together $H_1$ and $H_2$ cover all the edges of $Q_k$. The main
(and the only) theorem in this section is the following.
\begin{theorem}
\label{thm:min-max-cover}
For $k\in\nats$ we have
\[
\min_{E(H_1)\cup E(H_2) = E(Q_k)}(\max( |V(H_1)|, |V(H_2)|)) = 
\sum_{\ell = 0}^{\lfloor k/2\rfloor }\binom{k}{\ell} 
+ ( k\bmod 2 )\binom{k-1}{\lfloor k/2\rfloor}.
\]
\comment{ 
\[
\min_{E(H_1)\cup E(H_2) = E(Q_k)}(\max( |V(H_1)|, |V(H_2)|)) = 
\left\{ 
\begin{array}{ll}
  \sum_{\ell = 0}^{k/2}\binom{k}{\ell} & \mbox{ if $k$ is even, } \\
  2\left(
\sum_{\ell = 0}^{(k-1)/2}\binom{k-1}{\ell}\right)  & \mbox{ if $k$ is odd. }
\end{array}
\right.
\]
}
\end{theorem}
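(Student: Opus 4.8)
The key observation is the following dictionary. If $H_1, H_2$ are induced subgraphs of $Q_k$ with $E(H_1) \cup E(H_2) = E(Q_k)$, then every vertex $\tilde{x} \in V(Q_k)$ and every edge $e$ at $\tilde{x}$ must be covered; since $H_j$ is \emph{induced}, an edge $\tilde{x}\tilde{y}$ lies in $E(H_j)$ iff both endpoints lie in $V(H_j)$. So the condition ``$\tilde{x}$ has all $k$ of its edges covered'' forces: for each of the $k$ neighbors $\tilde{y}$ of $\tilde{x}$, at least one of $V(H_1), V(H_2)$ contains both $\tilde{x}$ and $\tilde{y}$. The cleanest way to exploit this is to consider, for a vertex $\tilde{x}$, the set $N[\tilde{x}]$ of its closed neighborhood; covering all edges at $\tilde{x}$ means $N[\tilde{x}] \subseteq V(H_1) \cup V(H_2)$ is \emph{not} enough — we need the stronger statement that for each neighbor $\tilde{y}$, some $H_j$ contains $\{\tilde{x},\tilde{y}\}$. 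First I would show that WLOG $V(H_1) \cup V(H_2) = V(Q_k)$ and that the optimum is achieved by a pair where $V(H_1), V(H_2)$ partition $Q_k$ "complementarily" in a suitable sense, or at least where we may analyze the two sides via $\phi$.

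\textbf{The reduction to $\phi$.} Here is the bridge. Set $n_j = |V(H_j)|$. A vertex $\tilde{x} \in V(H_1)$ has all its edges covered by the pair iff every neighbor $\tilde{y} \notin V(H_1)$ has the property that $\{\tilde{x},\tilde{y}\} \in E(H_2)$, i.e.\ $\tilde{y} \in V(H_2)$ \emph{and} $\tilde{x} \in V(H_2)$. So if $\tilde{x} \in V(H_1) \setminus V(H_2)$, \emph{all} $k$ neighbors of $\tilde{x}$ must lie in $V(H_1)$, i.e.\ $\tilde{x}$ is full in $Q_k[V(H_1)]$. Likewise for vertices only in $H_2$. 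Vertices in $V(H_1) \cap V(H_2)$ impose no constraint by themselves, but their edges to the outside do. Running this through: if we let $A = V(H_1)$, $B = V(H_2)$, then every $\tilde{x} \in A \setminus B$ is full in $Q_k[A]$ and every $\tilde{x} \in B\setminus A$ is full in $Q_k[B]$; moreover every $\tilde{x} \notin A \cup B$ — but we've arranged $A \cup B = V(Q_k)$ — is impossible. The count $|A\setminus B| \le \phi_k(|A|)$ by definition of $\phi_k$, and symmetrically $|B \setminus A| \le \phi_k(|B|)$; combined with $|A \setminus B| + |B \setminus A| = |A| + |B| - 2|A\cap B| = |A\cup B| + |A \cup B| - |A| - |B|$... more directly, $|A\setminus B| + |A \cap B| = |A|$ and $|A\setminus B| + |B\setminus A| + |A\cap B| = 2^k$, giving $|A\setminus B| + |B \setminus A| = 2^k - |A\cap B| \ge 2^k - \min(|A|,|B|)$. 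So $\phi_k(n_1) + \phi_k(n_2) \ge 2^k - \min(n_1,n_2)$, whence $2\phi_k(\max(n_1,n_2)) \ge \phi_k(n_1)+\phi_k(n_2) \ge 2^k - \max(n_1,n_2)$ using monotonicity (Observation~\ref{obs:2things}). This yields a clean lower bound on $\max(n_1,n_2)$: it must be the least $N$ with $2\phi_k(N) + N \ge 2^k$. Plugging in the formula from Theorem~\ref{thm:main-full} and simplifying should pin this down to the stated value; the HCR of $2^k$ is $\sum_{\ell=0}^{k}\binom{k}{\ell}$, so one works with $N$ near the ``middle layers'' and the UB function $m^{(k-i-1)}$ evaluated there, using Observation~\ref{obs:strict} to handle the boundary between layers.

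\textbf{The construction (upper bound).} For the matching upper bound I would take the natural candidate: $V(H_1) = $ all strings with at most $\lfloor k/2\rfloor$ ones together with (if $k$ is odd) the appropriate half of the middle layer as dictated by the HCR, and $V(H_2)$ the ``mirror image'' — strings with at least $\lceil k/2\rceil$ ones plus the complementary half. One checks that each edge of $Q_k$ joins two strings whose weights differ by one, hence lies within the lower layers (covered by $H_1$), within the upper layers (covered by $H_2$), or crosses the middle; the split of the middle layer must be arranged — using the reverse-lex / Kruskal-Katona structure exactly as in the proof of Theorem~\ref{thm:main-full} — so that the "cross" edges at layer $\lfloor k/2 \rfloor$ to layer $\lceil k/2\rceil$ are all absorbed on one side. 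For even $k$ this is cleanest: $H_1 = Q_k[\{$weight $\le k/2\}]$ and $H_2 = Q_k[\{$weight $\ge k/2\}]$ share exactly the middle layer, every edge is covered, and $|V(H_1)| = |V(H_2)| = \sum_{\ell=0}^{k/2}\binom{k}{\ell}$. For odd $k$ one splits layer $(k-1)/2$ (equivalently layer $(k+1)/2$) and the extra $\binom{k-1}{\lfloor k/2\rfloor}$ term appears because the star of each middle-layer vertex has to be kept intact on whichever side it lands.

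\textbf{Main obstacle.} The genuinely delicate part is the odd case of both bounds: showing the lower bound $N \ge \sum_{\ell=0}^{\lfloor k/2\rfloor}\binom{k}{\ell} + \binom{k-1}{\lfloor k/2\rfloor}$ exactly — i.e.\ that one cannot shave a vertex — requires evaluating $2\phi_k(N) + N$ at $N$ one less than the claimed value and checking it falls short of $2^k$, which means carefully computing a UB function $m^{(k-i-1)}$ for $m$ near $\binom{k-1}{\lfloor k/2 \rfloor}$ via its binomial representation and invoking Observation~\ref{obs:strict} for the strict drop. Symmetrically, the construction for odd $k$ needs the Kruskal-Katona "compressed" choice of which middle-layer vertices go where, so that the vertices retained on each side really are full in their induced subgraph; this is precisely the compression argument already used to prove $\phi_k(n) \ge f_k(n)$, applied with $n$ equal to the claimed extremal value. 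I expect both to go through by direct appeal to Theorem~\ref{thm:main-full} and the monotonicity/strictness observations, with the bookkeeping being the only real work.
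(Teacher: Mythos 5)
Your dictionary between edge coverings and full vertices is the right one, and it is exactly the paper's observation (in contrapositive form): a vertex of $Q_k$ not in $V(H_2)$ must have all $k$ of its edges in $E(H_1)$, hence is full in $H_1$. The construction you sketch for the upper bound is also the paper's. But there is a genuine quantitative gap in your lower bound that prevents it from matching the claimed formula.

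After setting $A=V(H_1)$, $B=V(H_2)$ (so $A\cup B=V(Q_k)$), you bound $|A\cap B|\le \min(n_1,n_2)$ and conclude $\phi_k(n_1)+\phi_k(n_2)\ge 2^k-\min(n_1,n_2)$, then monotonicity gives $2\phi_k(N)+N\ge 2^k$ for $N=\max(n_1,n_2)$. This is not sharp. Since $A\cup B=V(Q_k)$ you have the exact identity $|A\cap B|=n_1+n_2-2^k$, not merely the inequality $|A\cap B|\le\min(n_1,n_2)$; using the exact value yields $\phi_k(n_1)+\phi_k(n_2)\ge 2^{k+1}-n_1-n_2\ge 2^{k+1}-2N$, i.e.\ $\phi_k(N)+N\ge 2^k$ — a constraint twice as strong in $\phi_k$. (Equivalently, as the paper does, use only $|A\setminus B|=2^k-n_2\le\phi_k(n_1)\le\phi_k(N)$, again giving $\phi_k(N)+N\ge 2^k$.) Your weaker constraint $2\phi_k(N)+N\ge 2^k$ fails to pin down the answer: for $k=4$ one has $\phi_4(10)=3$, so $2\phi_4(10)+10=16=2^4$ is satisfied at $N=10$, while the true answer is $11$ (and indeed $\phi_4(10)+10=13<16$ whereas $\phi_4(11)+11=16$). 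So as written, your lower bound argument would only show $\min\max\ge 10$ for $k=4$, short of the stated $\sum_{\ell=0}^{2}\binom{4}{\ell}=11$.

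The remainder of your plan — evaluating $\phi_k$ at one less than the claimed value via Theorem~\ref{thm:main-full} and Observation~\ref{obs:strict}, and producing the lower/upper weight-layer construction (splitting on the first $k-1$ coordinates when $k$ is odd) — is exactly the bookkeeping the paper carries out, and goes through once the constraint is corrected to $\phi_k(N)+N\ge 2^k$.
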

The rest of this final section will be devoted
to prove Theorem~\ref{thm:min-max-cover}.

Assume $k$ is even and that $|V(H_1)| < \sum_{\ell = 0}^{k/2}\binom{k}{\ell}$.
In this case we have
\[
|V(H_1)| \leq \sum_{\ell = 0}^{k/2}\binom{k}{\ell} - 1 
=             \sum_{\ell = 0}^{k/2 - 1}\binom{k}{\ell} 
+ \left( \binom{k}{k/2} - 1\right).
\]
By Observation~\ref{obs:2things}, Theorem~\ref{thm:min-max-cover} and
Observation~\ref{obs:strict} we obtain the following.
\begin{eqnarray*}
\phi_k(|V(H_1)|) &\leq & 
  \phi_k\left(  \sum_{\ell = 0}^{k/2 - 1}\binom{k}{\ell} 
+ \left( \binom{k}{k/2} - 1\right)\right) \\
  & = & 
\sum_{\ell = 0}^{k/2 - 2}\binom{k}{\ell} 
+ \left( \binom{k}{k/2} - 1\right)^{(k/2)} \\
  & = & 
\sum_{\ell = 0}^{k/2 - 2}\binom{k}{\ell} 
+ \binom{k}{k/2 + 1} - k/2 \\ 
  & = & 
\sum_{\ell = 0}^{k/2 - 1}\binom{k}{\ell} - k/2 \\ 
  & < & 
\sum_{\ell = 0}^{k/2 - 1}\binom{k}{\ell}. 
\end{eqnarray*}
Since every vertex in $Q_k$ that is not full in $H_1$ 
is incident to an edge in $H_2$ and is therefore a vertex
in $H_2$ we have that 
\[
|V(H_2)| 
\geq |Q_k| - \phi_k(|V(H_1)|) 
> 2^k - \sum_{\ell = 0}^{k/2-1}\binom{k}{\ell}
=       \sum_{\ell = 0}^{k/2}\binom{k}{\ell}
\]
and hence $\max( |V(H_1)|, |V(H_2)|) > \sum_{\ell = 0}^{k/2}\binom{k}{\ell}$.
On the other hand, if $H_1$ and $H_2$ are the subgraph of $Q_k$ 
induced by binary strings of length $k$ with at most $k/2$ 0's
and with at most $k/2$ 1s respectively, then 
$|V(H_1)| = |V(H_2)| = \sum_{\ell = 0}^{k/2}\binom{k}{\ell}$ and hence
$\max( |V(H_1)|, |V(H_2)|) = \sum_{\ell = 0}^{k/2}\binom{k}{\ell}$.
Hence, as $H_1$ and $H_2$ cover all the edges of $Q_k$, then 
Theorem~\ref{thm:min-max-cover} is valid for even $k$.

Assume $k$ is odd and that 
\[
|V(H_1)| < \sum_{\ell = 0}^{\lfloor k/2\rfloor}\binom{k}{\ell} 
+ \binom{k-1}{\lfloor k/2\rfloor},
\]
and hence 
\[
|V(H_1)| \leq 
\sum_{\ell = 0}^{(k-1)/2}\binom{k}{\ell} + 
\left(\binom{k-1}{\frac{k-1}{2}} - 1\right).
\]
As in the even case, we obtain here by 
Observation~\ref{obs:2things}, Theorem~\ref{thm:min-max-cover} and
Observation~\ref{obs:strict} that 
\begin{eqnarray*}
\phi_k(|V(H_1)|) &\leq & 
\phi_k\left(\sum_{\ell = 0}^{(k-1)/2}\binom{k}{\ell} + 
\left(\binom{k-1}{\frac{k-1}{2}} - 1\right)\right) \\
  & = & \sum_{\ell = 0}^{(k-3)/2}\binom{k}{\ell} + 
\left(\binom{k-1}{\frac{k-1}{2}} - 1\right)^{\left(\frac{k-1}{2}\right)} \\
  & = & \sum_{\ell = 0}^{(k-3)/2}\binom{k}{\ell} + 
\binom{k-1}{\frac{k+1}{2}} - \frac{k-1}{2}\\
  & < & \sum_{\ell = 0}^{(k-3)/2}\binom{k}{\ell} + 
\binom{k-1}{\frac{k+1}{2}}. 
\end{eqnarray*}
Again, since every vertex in $Q_k$ that is not full in $H_1$ 
is incident to an edge in $H_2$ and is therefore a vertex
in $H_2$ we have that 
\[
|V(H_2)| 
\geq |Q_k| - \phi_k(|V(H_1)|) 
> 2^k - \sum_{\ell = 0}^{(k-3)/2}\binom{k}{\ell}
- \binom{k-1}{\frac{k+1}{2}}
= \sum_{\ell = 0}^{(k-1)/2}\binom{k}{\ell} 
+ \binom{k-1}{\frac{k-1}{2}}
\]
and hence 
\[
\max( |V(H_1)|, |V(H_2)|) > \sum_{\ell = 0}^{\lfloor k/2\rfloor}\binom{k}{\ell} 
+ \binom{k-1}{\lfloor k/2\rfloor}.
\]
On the other hand, considering the subgraphs $H_1$ and $H_2$ of $Q_k$ 
induced by binary strings of length $k$, where
$H_1$ is induced by the strings with at most $(k-1)/2$ 1's 
among the first $k-1$ bits, and 
$H_2$ is induced by the strings with at most $(k-1)/2$ 0's
among the first $k-1$ bits, we have that  
$|V(H_1)| = |V(H_2)| = 2\left( \sum_{\ell = 0}^{(k-1)/2}\binom{k-1}{\ell}\right)$
and hence 
\[
\max( |V(H_1)|, |V(H_2)|) = 
2\left( \sum_{\ell = 0}^{(k-1)/2}\binom{k-1}{\ell}\right)
=  \sum_{\ell = 0}^{\lfloor k/2\rfloor}\binom{k}{\ell} 
+ \binom{k-1}{\lfloor k/2\rfloor}.
\]
Hence, as $H_1$ and $H_2$ cover all the edges of $Q_k$, then
Theorem~\ref{thm:min-max-cover} is valid for odd $k$.
This completes the proof of Theorem~\ref{thm:min-max-cover}.

\subsection*{Acknowledgments}  

Sincere thanks to the anonymous referees for ...

\flushright{\today}

\end{document}